\tikzstyle arrowstyle=[scale=1]
\DeclareMathAlphabet{\mathfr}{U}{euf}{m}{n}
\newtheorem{theorem}{Theorem}[section]
\newtheorem*{theorem*}{Theorem}
\newtheorem{proposition}[theorem]{Proposition}
\newtheorem{corollary}[theorem]{Corollary}
\newtheorem{lemma}[theorem]{Lemma}
\newtheorem{question}[theorem]{Question}
\newtheorem{assumption}[theorem]{Assumption}
\theoremstyle{remark}
\newtheorem{remark}[theorem]{Remark}
\newtheorem{example}[theorem]{Example}
\definecolor{myblue}{rgb}{.2,.6,.75}
\definecolor{mygreen}{rgb}{.4,.7,.4}
\newcommand\acc[2]{\ensuremath{{}^{#1}\hskip-0.3ex{#2}}}
\newcommand{\Q}{\mathbb Q}
\newcommand{\QQ}{\mathbb Q}
\newcommand{\Qbar}{{\overline{\mathbb Q}}}
\newcommand{\kbar}{{\bar{K}}}
\newcommand{\bK}{{\bar{K}}}
\newcommand{\GK}{G_K}
\newcommand{\Gal}{\mathrm{Gal}}
\newcommand{\R}{\mathbb R}
\newcommand{\Z}{\mathbb Z}
\newcommand{\C}{\mathbb C}
\newcommand{\RR}{\mathbb R}
\newcommand{\CC}{\mathbb C}
\newcommand{\GL}{\mathrm{GL}}
\newcommand{\M}{\mathrm{M }}
\newcommand{\GSp}{\mathrm{GSp}}
\newcommand{\et}{\mathrm{et}}
\newcommand{\End}{\operatorname{End}}
\newcommand{\Hom}{\operatorname{Hom}}
\newcommand{\Frob}{\operatorname{Fr}}
\newcommand{\Ind}{\operatorname{Ind}}
\newcommand{\Jac}{{\operatorname{Jac}}}
\newcommand{\Std}{{\operatorname{Std}}}
\newcommand{\Aut}{\operatorname{Aut}}
\newcommand{\OO}{\mathcal O}
\newcommand{\p}{\mathfrak{p}}
\newcommand{\PP}{\mathfrak{P}}
\newcommand{\af}{\mathfrak{a}}
\newcommand{\Nf}{\mathfrak{N}}
\newcommand{\Res}{\operatorname{Res}}
\newcommand{\Tr}{\operatorname{Tr}}
\newcommand{\cO}{\mathcal{O}}
\newcommand{\A}{\mathbb{A}}
\newcommand{\Unitary}{\operatorname{U}}
\newcommand{\Nm}{\operatorname{Nm}}
\numberwithin{equation}{section}
\begin{document}
\title[]{Abelian threefolds with imaginary multiplication}

\author{Francesc Fit\'e}

\address{Departament de matem\`atiques i inform\`atica and Centre de recerca matem\`atica,
Universitat de Barcelona,
Gran via de les Corts Catalanes 585, 08007 Barcelona, Catalonia}
\email{ffite@ub.edu}
\urladdr{http://www.ub.edu/nt/ffite/}

\author{Pip Goodman}

\address{Departament de matem\`atiques i inform\`atica,
Universitat de Barcelona,
Gran via de les Corts Catalanes 585, 08007 Barcelona, Catalonia}
\email{pip.goodman@ub.edu}
\urladdr{https://pipgoodman.github.io/}

\begin{abstract} 
Let $A$ be an abelian threefold defined over a number field $K$ with potential multiplication by an imaginary quadratic field $M$.
Under mild assumptions on $K$, if $A$ has signature $(2,1)$ and the multiplication by $M$ is defined over $KM$, we attach to $A$ an elliptic curve defined over $K$ with potential complex multiplication by $M$, whose attached Galois representation is determined by the Hecke character associated to the determinant of the compatible system of $\lambda$-adic representations of $A$.
We deduce that if $\End^0(A_\bK)$ is an imaginary quadratic field, then it necessarily has class number $\leq [KM \colon M]$. 
\end{abstract}

\maketitle
\tableofcontents

\section{Introduction}\label{section: introduction}

Throughout this article $A$ denotes an abelian threefold defined over a number field $K$, $M$ a quadratic extension of $\QQ$, and we assume that $A$ has potential multiplication by $M$.
This means that there exists an algebra embedding
$$
\iota \colon M\hookrightarrow \End^0(A_\kbar) 
$$
sending $1 \in M$ to the identity automorphism of $A_{\kbar}$.
Here $\End^0(A_\kbar)$ stands for $\End(A_\kbar)\otimes \Q$, the so-called geometric endomorphism algebra of $A$.
We shall later show that $M$ is forced to be an imaginary quadratic field; see Lemma \ref{lemma: algebra embedding}. 

Via the embedding $\iota$, the $3$-dimensional $\kbar$-vector space of regular differentials $H^0(A_{\kbar}, \Omega^1_{A_\kbar})$ is equipped with an $M\otimes \kbar$-module structure.
Each embedding $\sigma\colon M\hookrightarrow \kbar$ induces a $\kbar$-algebra map $ M\otimes \kbar \rightarrow \kbar$ that we use to define
\begin{equation}\label{equation: signature}
r_\sigma\coloneqq\dim_{\kbar} H^0(A_{\kbar}, \Omega^1_{A_\kbar})\otimes_{M\otimes \kbar, \sigma} \kbar.
\end{equation}
Denote by $\overline\sigma$ the composition of $\sigma$ with the nontrivial automorphism of $M$, and call the formal sum $r_\sigma\cdot \sigma + r_{\overline \sigma}\cdot \overline\sigma $ the signature of $(A,\iota)$.
We sometimes simply write $(r_\sigma,r_{\overline \sigma})$ to denote the signature and without loss of generality we will assume that $r_\sigma\geq r_{\overline\sigma}$.
We shall later show that the signature is forced to be $(2,1)$ whenever $\End^0(A_\kbar) \not \simeq \M_3(M)$; see Proposition \ref{proposition: prel2}. 

Suppose that $\iota(M)\subseteq \End^0(A)$, in which case we say that $A$ has multiplication by $M$. For a rational prime $\ell$, let $V_\ell(A)$ denote the rational $\ell$-adic Tate module of $A$.
Via the embedding $\iota$, the space $V_\ell(A)$ is endowed with an $M\otimes \Q_\ell$-module structure.
For $\lambda$ a prime of $M$ above $\ell$, let $M_\lambda$ denote the completion of $M$ at $\lambda$ and $\sigma_\lambda \colon M\hookrightarrow M_\lambda$ the natural inclusion.
Write $V_\lambda(A)$ for $V_\ell(A)\otimes_{M\otimes \Q_\ell,\sigma_\lambda} M_\lambda$ where the tensor product is taken with respect to the $\QQ_\ell$-algebra map $M\otimes \QQ_\ell \rightarrow M_\lambda$ induced by $\sigma_\lambda$.
It is a 3-dimensional $M_\lambda$-vector space equipped with an action of the absolute Galois group $G_K \coloneqq\Gal(\kbar/K)$.
We will denote by $\overline \cdot$ the the nontrivial automorphism of $M$. Let $\varphi_\lambda:M_{\overline \lambda} \rightarrow M_{ \lambda}$ denote the $\Q_\ell$-isomorphism rendering the diagram
\begin{align*}
  \xymatrix{
M_{\overline \lambda}  \ar[r]^{\varphi_{ \lambda}} & M_{ \lambda}\\
M \ar[r]^{\overline \cdot }\ar[u]^{\sigma_{\overline \lambda} } & M  \ar[u]_{\sigma_{ \lambda }}}
\end{align*}
commutative. 
We will write $\overline V_{ \lambda}(A)$ to denote $V_{\overline \lambda}(A)$ viewed as an $M_\lambda[G_K]$-module via the isomorphism $\varphi_\lambda$. 

\textbf{Main result.} The following is the main result of this article.

\begin{theorem}\label{theorem: maintheorem}
Let $M$ be a quadratic field and $A$ an abelian threefold defined over a number field $K$ such that there exists an algebra embedding $\iota\colon M\hookrightarrow \End^0(A_{KM})$ sending $1 \in M$ to the identity automorphism of $A_{KM}$.
Suppose that $(A,\iota)$ has signature $(2,1)$. If $K\not = KM$, assume further that there exists a field embedding $\sigma\colon K \hookrightarrow \C$ such that either $\sigma(K)\subseteq \R$ or $j(\cO_M)\in \sigma(K)$.

Then there exists an elliptic curve $E$ defined over $K$ with potential complex multiplication by $M$ such that, for every prime $\lambda$ of $M$ over a rational prime $\ell$, the following is satisfied:
\begin{enumerate}[i)] 
\item If $K=KM$, then 
$$
\big(\wedge ^3V_\lambda(A)\oplus \wedge ^3\overline V_{\lambda}(A) \big)(-1)\simeq V_\ell(E)\otimes_{\Q_\ell}M_\lambda
$$
as $M_\lambda[G_K]$-modules.
\item If $K\not=KM$, then 
$$
\Ind^K_{KM}\big(\wedge ^3V_\lambda(A_{KM}) \big)(-1)\simeq V_\ell(E)\otimes_{\Q_\ell}M_\lambda
$$
as $M_\lambda[G_K]$-modules.
\end{enumerate}
In particular, $M$ has class number $\leq [KM:M]$.
\end{theorem}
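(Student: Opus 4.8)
The plan is to reduce the whole statement to a computation with a single rank-one object, namely the $M$-linear top exterior power of the Tate module, and then to recognize this object as the Hecke character of a CM elliptic curve. Set $F\coloneqq K$ in case (i) and $F\coloneqq L$ in case (ii), and write $\chi_\lambda\coloneqq \wedge^3 V_\lambda(A)$, resp. $\chi_\lambda\coloneqq \wedge^3 V_\lambda(A_L)$, so that in either case $\chi_\lambda\colon G_F\to M_\lambda^\times$ is a character. Since the Rosati involution attached to a polarization is positive, it restricts to complex conjugation on the imaginary quadratic field $M$, so the Weil pairing $V_\ell(A)\times V_\ell(A)\to\Q_\ell(1)$ pairs the $\lambda$- and $\overline\lambda$-isotypic parts against each other, inducing after extension of scalars a perfect $M_\lambda$-bilinear pairing $V_\lambda(A)\times\overline V_\lambda(A)\to M_\lambda(1)$. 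Taking third exterior powers over $M_\lambda$ gives
\[
\wedge^3\overline V_\lambda(A)\simeq \chi_\lambda^{-1}\chi_{\mathrm{cyc}}^{3},
\]
where $\chi_{\mathrm{cyc}}$ is the cyclotomic character. Hence the left-hand sides of (i) and (ii) decompose, as $M_\lambda[G_F]$-modules, into the two characters $\chi_\lambda\chi_{\mathrm{cyc}}^{-1}$ and $\chi_\lambda^{-1}\chi_{\mathrm{cyc}}^{2}$. This is exactly the shape of $V_\ell(E)\otimes_{\Q_\ell}M_\lambda$ for a CM elliptic curve, whose two CM characters $\psi_\lambda,\overline\psi_\lambda$ satisfy $\psi_\lambda\overline\psi_\lambda=\chi_{\mathrm{cyc}}$; one checks that the assignment $\psi_\lambda\coloneqq\chi_\lambda\chi_{\mathrm{cyc}}^{-1}$ forces $\overline\psi_\lambda=\chi_\lambda^{-1}\chi_{\mathrm{cyc}}^{2}$, matching the decomposition above.

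Next I would show that $\{\chi_\lambda\}_\lambda$ is the system of $\lambda$-adic realizations of the rank-one motive $\wedge^3_M H^1(A_F)$ with coefficients in $M$; being of rank one it has abelian Galois action and therefore corresponds to an algebraic Hecke character $\chi$ of $F$ valued in $M$. The signature hypothesis enters decisively here: the signature $(2,1)$ pins down the Hodge--Tate weights of $V_\lambda(A)$ to be $\{1,1,0\}$, so $\chi_\lambda$ has Hodge--Tate weight $2$, and a direct computation shows that $\psi\coloneqq\chi\cdot\Nm^{-1}$ (the twist by the norm/cyclotomic Hecke character) has Hodge--Tate weight $1$, i.e. precisely the infinity type of the Hecke character attached to an elliptic curve with complex multiplication by $M$. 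Were the signature $(3,0)$ instead, the determinant weight would be $3$ and the twisted character would not be of elliptic-curve type, so this step genuinely uses the hypothesis.

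With $\psi$ at hand, I would invoke the converse direction of the theory of complex multiplication (the main theorem of CM, after Shimura--Taniyama and Casselman): an algebraic Hecke character of the elliptic-curve infinity type found above is realized by an elliptic curve $E$ with CM by $M$, and the field-of-definition bookkeeping yields such an $E$ defined over $K$ whose CM is defined over $F$. Matching $\lambda$-adic realizations then produces the two isomorphisms: in case (i) the two characters assemble over $G_K$ directly, whereas in case (ii) the character $\psi_\lambda$ is naturally defined over $G_L$, and its induction to $G_K$ supplies $\Ind^K_L(\wedge^3 V_\lambda(A_L))(-1)\simeq V_\ell(E)\otimes_{\Q_\ell}M_\lambda$, reflecting that the CM of $E$ is defined over $L$ and not over $K$. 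I expect this construction of $E$ over $K$---computing the infinity type from the signature together with the descent inside the converse-CM step---to be the main obstacle; the exterior-algebra computation of the first paragraph is routine by comparison.

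Finally I would read off the class number bound from the module isomorphisms. In both cases the restriction of $V_\ell(E)\otimes_{\Q_\ell}M_\lambda$ to $G_L$ is a sum of characters, so $\rho_{E,\ell}(G_L)$ is abelian and the complex multiplication of $E$ is defined over $L$; for an elliptic curve this forces $M\subseteq L$. Since $E$ is defined over $K\subseteq L$, we have $j(E)\in L$, so the ring class field $M(j(E))$---which contains the Hilbert class field $H_M$ of $M$---is contained in $L$. Therefore $2h_M=[H_M:\Q]\leq[M(j(E)):\Q]\leq[L:\Q]$, and so $h_M\leq[L:\Q]/2$.
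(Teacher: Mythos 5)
Your treatment of case (i) is essentially the paper's own argument: the signature $(2,1)$ pins the infinity type of the determinant system $\{\wedge^3 V_\lambda(A)(-1)\}_\lambda$ to $\sigma\circ\Nm_{K/M}$, and the converse CM theorem (Casselman, the paper's Theorem \ref{theorem: Casselman}) produces $E$ over $K$, using $M\subseteq K$ from Proposition \ref{proposition: prel1}. Your pairing identity $\wedge^3\overline V_\lambda(A)\simeq(\wedge^3V_\lambda(A))^{-1}\chi_\ell^{3}$ is consistent with the paper's relation $\psi\overline\psi=\Nm$, and your class-number argument via $H_M\subseteq M(j(E))\subseteq L$ is a legitimate variant of the paper's (which first invokes R\'emond to reduce to the maximal order). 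So far, so good.

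The genuine gap is in case (ii), where you dispose of the descent with ``field-of-definition bookkeeping yields such an $E$ defined over $K$'' --- you correctly flag this as the main obstacle, but you supply no argument, and it is exactly the technical heart of the paper. Casselman only gives $E_0$ over $L$; to descend one must verify the three conditions of Lang (Chapter 5, Theorem 6.3): (1) $\acc{s}{\psi}=\overline\psi$; (2) $K$ contains the field of moduli of $E_0$; and (3'') there is a prime $\p$ of $K$ inert in $L/K$ with $\psi(\PP)=-\Nm(\p)$. Condition (3'') is not formal and is forced by the very statement you want: at an inert $\p$, the determinant of $\Ind^K_L\big(\wedge^3V_\lambda(A_L)\big)(-1)$ at $\Frob_\p$ equals $-\psi(\PP)$, whereas $\det V_\ell(E)=\chi_\ell$ gives $+\Nm(\p)$; if the sign were $+$, the induced module would have determinant $\epsilon_{L/K}\chi_\ell$ and could not be the Tate module of \emph{any} elliptic curve over $K$ (no quadratic twist repairs this, since $\Ind^K_L\psi\otimes\epsilon_{L/K}\simeq\Ind^K_L\psi$). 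The paper establishes the sign (Lemma \ref{lemma: psi on inert}) via Milne's isogeny $A^2\sim\Res_{L/K}A_L$ --- available precisely because minimality of $L$ gives $\iota(M)\cap\End^0(A)=\Q$, a hypothesis your proposal never uses --- which forces $L_\p(A,T)$ to be a polynomial in $T^2$, together with the Kedlaya--Sutherland bound to isolate the factor $(1+\Nm(\p)T^2)$, and the rationality of $a_\PP$ (from $\acc{s}{\PP}=\PP$ and Lemma \ref{lemma: complex comm}) to exclude the eigenvalue pattern $(-\Nm(\p),\alpha,\alpha)$. Condition (1) likewise requires proof (the isomorphism $\acc{s}{V_\lambda}(A_L)\simeq\overline V_\lambda(A_L)$, exploiting that $A$ is defined over $K$), and (2) is then deduced from (1) via the unicity clause of Casselman's theorem. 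None of this is in your proposal, so statement (ii) remains unproven as written.
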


See \cite[Corollary 4.9, Proposition 4.14 and Remark 4.3]{LS24} for an analogue of this theorem  in the context of variations of Hodge structures and Chow motives over $\mathbb C$ when $A$ is the Jacobian of a Picard curve. It seems interesting to explore the relation of this theorem to \cite[Theorem A]{ACV20}.

\textbf{Applications.}
Suppose now that $\End^0(A_\bK)$ is a field.
Under this assumption, we shall later show that $\iota(M)\subseteq \End^0(A_{KM})$; see Proposition \ref{proposition: prel1}.
In view of this and Proposition \ref{proposition: prel2}, base changing to $KM$ and applying Theorem \ref{theorem: maintheorem}  one obtains the following.

\begin{corollary}\label{corollary: bounded class number}
Let $A$ be an abelian threefold defined over $K$ such that $\End^0(A_\bK)$ is a field containing $M$. Then $M$ has class number $\leq [KM: M]$.
\end{corollary}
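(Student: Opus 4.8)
The plan is to deduce the bound directly from the curve $E$ furnished by parts (i) and (ii), invoking the classical theory of complex multiplication. Set $\OO \coloneqq \End(E_\bK)$, an order in the imaginary quadratic field $M$, and write $h(\OO)$ for its class number and $h_M$ for the class number of $M$. I would use two standard inputs. First, the inclusion $\OO \subseteq \OO_M$ into the maximal order induces a surjection $\Pic(\OO) \twoheadrightarrow \Pic(\OO_M)$, whence $h(\OO) \geq h_M$. Second, by the theory of ring class fields the field $M(j(E))$ is the ring class field attached to $\OO$ and satisfies $[\,\Q(j(E)) : \Q\,] = [\,M(j(E)) : M\,] = h(\OO)$. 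Since $E$ is defined over $K$, its $j$-invariant lies in $K$, so $\Q(j(E)) \subseteq K$.

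For the case $[L:K] = 2$ this is already enough: combining $h_M \leq h(\OO) = [\,\Q(j(E)) : \Q\,] \leq [K:\Q]$ with $[L:\Q]/2 = [L:K]\cdot[K:\Q]/2 = [K:\Q]$ yields $h_M \leq [L:\Q]/2$. The subtle case is $L = K$, where $[L:\Q]/2 = [K:\Q]/2$ and the crude estimate $h(\OO) \leq [K:\Q]$ is off by a factor of two. To recover this factor I would show that $M \subseteq K$; granting this, $M(j(E)) \subseteq K$, so $h(\OO) = [\,M(j(E)) : M\,] \leq [K:M] = [K:\Q]/2$, and again $h_M \leq h(\OO) \leq [L:\Q]/2$.

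It remains to prove $M \subseteq K$ when $L = K$. Here $L = K$ means $\iota(M) \subseteq \End^0(A)$, so $M$ acts $K$-rationally on the $3$-dimensional $K$-vector space $H^0(A, \Omega^1_A)$ of global differentials, making it a nonzero module over $M \otimes_\Q K$ (nonzero because $\iota(1)$ acts as the identity). Writing $M = \Q(\sqrt{d})$, if $M \not\subseteq K$ then $M \otimes_\Q K \cong K[x]/(x^2 - d)$ is a field quadratic over $K$, forcing $\dim_K H^0(A, \Omega^1_A)$ to be even, contradicting $\dim_K H^0(A, \Omega^1_A) = \dim A = 3$. Hence $M \subseteq K$. (Equivalently, the asymmetry $r_\sigma \neq r_{\overline\sigma}$ of the signature cannot descend to $K$ unless $\sigma$ and $\overline\sigma$ are already $K$-rational.)

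The main obstacle is precisely this factor-of-two sharpening in the case $L = K$: the bare $j$-invariant argument only gives $h_M \leq [K:\Q]$, and obtaining the stated $[K:\Q]/2$ relies on the parity argument forcing $M \subseteq K$. Everything else is a routine translation of the module isomorphisms of (i) and (ii), which already produce the elliptic curve $E$ with potential complex multiplication by $M$ over $K$, into the language of $j$-invariants and ring class fields.
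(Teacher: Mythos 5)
Your derivation of the class-number bound from the elliptic curve $E$ is correct, and in one respect slicker than the paper's: where the paper invokes R\'emond's theorem to replace $E$ by a $K$-isogenous curve whose endomorphism ring is the maximal order, you work directly with the order $\OO=\End(E_\bK)$ via the surjection $\Pic(\OO)\twoheadrightarrow\Pic(\OO_M)$ and the ring class field of $\OO$. Your parity argument that $\iota(M)\subseteq\End^0(A)$ forces $M\subseteq K$ is also valid (and arguably cleaner than the paper's determinant argument for part i) of Proposition \ref{proposition: prel1}).

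The genuine gap is that you take for granted that Theorem \ref{theorem: maintheorem} applies, i.e.\ that the curve $E$ exists at all. The corollary assumes only that $\End^0(A_\bK)$ is a field containing $M$, and the entire content of the paper's deduction is the verification of the theorem's two hypotheses: that the signature is $(2,1)$ (Proposition \ref{proposition: prel2}, using that a field is not $\M_3(M)$), and that the minimal field $L$ over which $\iota(M)$ is defined satisfies $[L:K]\le 2$ --- in fact $L=KM$ (Proposition \ref{proposition: prel1}). The hard half of the latter is the \emph{converse} of your parity argument: that $M\subseteq K$ already forces $\iota(M)\subseteq\End^0(A)$ when $\End^0(A_\bK)$ is a field. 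The paper proves this by showing that the $G_K$-action on $\End^0(A_\bK)$ preserves $\iota(M)$, so that $\Gal(L/K)$ embeds into $\Gal(M/\Q)$, and then by an odd-cardinality fixed-point argument on the eigen-differentials. Nothing in your proposal addresses this, and without it you have neither the curve $E$ nor the identification $L=KM$ needed to convert your bound $h_M\le[L:\Q]/2$ into the stated $h_M\le[KM:M]$; for instance, if $M\subseteq K$ yet $[L:K]=2$ were possible, your second-case estimate $h_M\le[K:\Q]$ would miss the target $[KM:M]=[K:\Q]/2$ by a factor of two.
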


One formulation of Coleman's conjecture predicts that there exist only finitely many geometric endomorphism algebras of abelian varieties of bounded dimension defined over a number field of bounded degree (see \cite{OSZ21} for a discussion of variants of this conjecture and for its connections with several other conjectures).

Orr and Skorobogatov \cite{OS18} have shown that Coleman's conjecture holds for abelian varieties with complex multiplication.
In particular, there are only finitely many degree six fields that may occur as geometric endomorphism algebras of abelian threefolds defined over a number field of bounded degree.
When $K=\Q$, a complete list of such fields is given by \cite[Theorem 4.3.1]{Kil16}. When $K=\Q$ and $\End^0(A_\bK)$ is a sextic cyclic field, Corollary \ref{corollary: bounded class number} recovers \cite[Proposition 3.3.3]{Kil16}, which is an intermediate result towards the proof of \cite[Theorem 4.3.1]{Kil16}.

Heilbronn's class number theorem \cite{Heilbronn} states that there are only finitely many  imaginary quadratic fields of bounded class number.
In view of this, Corollary \ref{corollary: bounded class number} implies that there are only finitely many quadratic fields that occur as geometric endomorphism algebras of abelian threefolds defined over a number field of bounded degree.

Ari Shnidman pointed out to us that one may attempt an alternative approach to proving Corollary \ref{corollary: bounded class number} using the theory of canonical models of Shimura varieties \cite{Del79}. The Galois action on the irreducible components of one such model for a Shimura variety associated to the unitary group over $M$ with signature $(2,1)$ (as described in \cite{KR14}, for example) should allow one to show that the Hilbert class field of $M$ is a subfield of $KM$, recovering the bound $h_M\leq [KM:M]$.

We also expect Theorem \ref{theorem: maintheorem} to have applications in computing the Zeta functions of abelian threefolds with imaginary multiplication, improving on the algorithm in \cite{AFP22}, and this is the object of work in progress.

\textbf{Outline of the article.} In \S\ref{section: preliminaries}, we study several basic aspects of abelian threefolds with imaginary multiplication.
Among them, some already mentioned in this introduction: the field of definition of the endomorphisms, and the relation between the signature and the isogeny decomposition. In \S\ref{section: case i}, we prove part $i)$ of Theorem~\ref{theorem: maintheorem}.
We essentially apply a theorem of Casselman that attaches an elliptic curve to the algebraic Hecke character $\psi$ associated to the Hodge--Tate compatible system $\{W_{\lambda}(-1)\}_\lambda$ where $W_{\lambda}$ denotes $\wedge ^3V_{\lambda}(A)$.
Given part $i)$, part $ii)$ of Theorem~\ref{theorem: maintheorem} amounts to a descent result from $KM$ to $K$.
This is accomplished in \S\ref{section: case ii}. The conditions for this descent may be expressed in terms of the Hecke character $\psi$, and building on a result by Milne we establish the desired properties for $\psi$.
This is the key technical part of the article. In \S\ref{section: cohomological}, we give a reinterpretation of our main result in terms of the third étale cohomology group of an abelian threefold with imaginary multiplication. In \S\ref{section: Q},  we note how for $K=\Q$ the modular results by Hecke, Eichler, and Shimura allow one to circumvent the use of Casselman's theorem.
In there, we exhibit explicit examples of genus 3 curves defined over $\Q$ whose Jacobians respectively have potential imaginary multiplication by $\Q(\sqrt{-2})$ and $\Q(\sqrt{-7})$. These examples were kindly shared with us by Shiva Chidambaran and Drew Sutherland. We conclude the article by raising the question of which quadratic imaginary fields of class number $1$ may occur as geometric endomorphism algebras of threefolds over $\Q$.

\textbf{Notation and terminology.} Throughout this article, $K$ is a number field and~$\bK$ denotes a fixed algebraic closure.
For $X$ an abelian variety defined over $K$ and $L$ a field extension of $K$, we write $X_L$ to denote the base change of $X$ from $K$ to $L$, and $\End(X_L)$ to denote the ring of endomorphisms defined over $L$.
In particular, $\End(X_K) = \End(X)$ denotes the ring of endomorphisms defined over $K$.

For $F$ a number field, we let $\cO_F$ denote its ring of integers, and  $h_F$, its class number.
For $n\in \Z$, a rational prime $\ell$, and a $\Q_\ell[G_K]$-module $V$, we write $V(n)$ to denote the $n$th Tate twist $V\otimes \chi_\ell^{n}$, where $\chi_\ell$ denotes the $\ell$-adic cyclotomic character.  
We denote the induction of a representation from $G_L$ to $G_K$ by $\Ind^K_L$.
Viewing $\OO_M$ as a lattice in $\CC$ via some embedding $M \hookrightarrow \CC$, we let $j(\OO_M)$ denote its $j$-invariant.

Let $\mathcal{X,Y}$ be $\QQ$-algebras with multiplicative identity elements and $\iota \colon \mathcal{X} \hookrightarrow \mathcal{Y}$ an injective map.
We call $\iota$ an algebra embedding, if it respects multiplication and is $\QQ$-linear with respect to the $\QQ$-structure of $\mathcal{X}$ and $\mathcal{Y}$. 
We shall further call $\iota$ a unital algebra embedding if it sends the multiplicative identity of $\mathcal{X}$ to that of $\mathcal{Y}$.

\textbf{Acknowledgements.} Thanks to Shiva Chidambaran and Drew Sutherland for providing the examples with potential multiplication by $\Q(\sqrt{-2})$ and $\Q(\sqrt{-7})$ exhibited in \S\ref{section: Q}. The validity of several results in \S\ref{section: introduction}, as well as of the second part of Lemma \ref{lemma: induction module}, was surmised by Shiva Chidambaran from the inspection of numerical data.
We are indebted to him for sharing this knowledge with us. 
Thanks to Jeff Achter, Matt Broe, Xavier Guitart, Jef Laga, Martin Orr, Francesc Pedret, Ari Shnidman, Marco Streng, Charles Vial and the anonymous referee for helpful discussions and suggestions.
The authors were partially supported by grant PID2022-137605NB-I00.
Fité was additionally supported by the Ramón y Cajal fellowship  RYC-2019-027378-I, the María de Maeztu program CEX2020-001084-M, and the grant 2021 SGR 01468.

\section{Preliminaries}\label{section: preliminaries}

In this section we collect some background results on abelian varieties with potential multiplication by a number field.

\begin{lemma}\label{lemma: algebra embedding}
Let $X$ be an abelian variety defined over $K$ of dimension $g$ and $F$ a finite extension of $\QQ$.
Suppose there is an algebra embedding $\iota \colon F \hookrightarrow \End^0(X_\kbar)$. Then the following are equivalent:
\begin{enumerate}[i)]
    \item $\iota$ is unital.
    \item The action of $\iota(F)$ on the regular differentials of $X_\bK$ is equivalent to a direct sum of embeddings $F \hookrightarrow \bK$.
\end{enumerate}
If the above hold, then no element in $\iota(F \setminus\QQ)$ sends a regular differential of $X_\bK$ to a rational multiple of itself.
Moreover, if $[F\colon \Q]$ does not divide $g$, then $F$ is not totally real.
\end{lemma}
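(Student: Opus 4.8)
The plan is to prove the contrapositive, working under the standing hypothesis that $\iota$ is unital (equivalently, that condition (ii) holds): I would show that if $F$ is totally real then $[F:\Q]$ divides $g$. Fix once and for all an embedding $\bK \hookrightarrow \C$, which is harmless as $\bK$ is an algebraic closure of a number field. Passing to the complex abelian variety $X_\C$, the first Betti cohomology $V \coloneqq H^1(X_\C, \Q)$ is a $\Q$-vector space of dimension $2g$ on which $\iota(F)$ acts $\Q$-linearly; since $F$ is a field, this makes $V$ into an $F$-vector space, necessarily free, of dimension $\dim_F V = 2g/[F:\Q]$. In particular $V_\C \coloneqq V \otimes_\Q \C$ is a free $F\otimes_\Q\C$-module, so each embedding $\sigma\colon F\hookrightarrow \C$ occurs in the eigenspace decomposition of $V_\C$ with one and the same multiplicity $m = 2g/[F:\Q]$.

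Next I would compare this with the Hodge decomposition $V_\C = V^{1,0}\oplus V^{0,1}$. Under the isomorphism $V^{1,0}\simeq H^0(X_\C,\Omega^1)\simeq H^0(X_\bK, \Omega^1_{X_\bK})\otimes_{\bK}\C$, condition (ii) says that $\sigma$ occurs in $V^{1,0}$ with multiplicity exactly $r_\sigma$, where $\sum_\sigma r_\sigma = g$ and each $r_\sigma$ is a nonnegative integer. The essential observation is that the $F$-action on $V$ is $\Q$-rational, hence commutes with the complex conjugation $c$ of $V_\C$ attached to the real structure $V_\R$; since $c$ interchanges $V^{1,0}$ and $V^{0,1}$ and sends the $\sigma$-eigenspace to the $\overline\sigma$-eigenspace, the multiplicity of $\sigma$ in $V^{0,1}$ equals $r_{\overline\sigma}$. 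Adding the two contributions, $\sigma$ occurs in $V_\C$ with total multiplicity $r_\sigma + r_{\overline\sigma}$, and comparing with the previous paragraph yields $r_\sigma + r_{\overline\sigma} = 2g/[F:\Q]$ for every $\sigma$.

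Finally, if $F$ is totally real then every $\sigma$ has image in $\R$, so $\overline\sigma = \sigma$ and the identity above becomes $2r_\sigma = 2g/[F:\Q]$, that is $r_\sigma = g/[F:\Q]$. As $r_\sigma$ is a nonnegative integer, this forces $[F:\Q]\mid g$, which is exactly the contrapositive of the claim.

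I expect the only delicate point to be the compatibility in the second paragraph: one must check that complex conjugation on $V_\C$ intertwines the $\sigma$- and $\overline\sigma$-eigenspaces of the $F$-action, which rests precisely on the action being defined over $\Q$ on $V$. Everything else is bookkeeping with multiplicities. A purely $\ell$-adic variant, replacing $V$ by the rational Tate module $V_\ell(X)$ and using the comparison with $H^1(X_\C,\Q)\otimes_\Q\Q_\ell$, would work equally well and avoid the choice of complex embedding, but the Hodge-theoretic formulation seems the most transparent.
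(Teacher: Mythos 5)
There is a genuine gap: your proposal only addresses the final assertion of the lemma (that $[F:\Q]\nmid g$ forces $F$ not totally real) and leaves the rest unproved. The main content of the statement is the equivalence of i) and ii), together with the claim that no element of $\iota(F\setminus\QQ)$ sends a regular differential to a rational multiple of itself; your very first sentence, ``working under the standing hypothesis that $\iota$ is unital (equivalently, that condition (ii) holds),'' presupposes exactly the equivalence that needs to be established. Note that ii) $\Rightarrow$ i) is not a formality: an algebra embedding $F\hookrightarrow \End^0(X_\kbar)$ need not be unital (see Example \ref{example:hyperelliptic_embedding_not_an_algebra_embedding} in the paper), and the paper proves this direction by a minimal-polynomial argument --- if the restriction of the rational representation $\rho_\QQ$ to $\iota(F)$ is a sum of embeddings, then for a primitive element $\alpha$ of $F$ the minimal polynomial of $\rho_\QQ(\iota(\alpha))$ equals that of $\alpha$, so $\QQ(\iota(\alpha))\simeq F$ is a field and $\iota(1)=\mathrm{Id}_{X_\kbar}$. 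The converse i) $\Rightarrow$ ii) is quoted from Shimura (\cite[\S5 Lemma 1]{Shi98}), and the rational-multiple claim then follows because the eigenvalues of $\iota(\beta)$ on the differentials are conjugates of $\beta$, none of which lies in $\QQ$ when $\beta\notin\QQ$. None of this appears in your write-up.

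The part you do prove is correct, and in substance it is the same argument as the paper's: your identity $r_\sigma+r_{\overline\sigma}=2g/[F:\Q]$, obtained from the compatibility of complex conjugation on $V_\CC$ with the Hodge decomposition and with the $\sigma$-eigenspaces, is exactly the paper's relation $n_\sigma+n_{\overline\sigma}=h$ coming from $\rho_\QQ\simeq\rho_\CC\oplus\overline{\rho}_\CC$. The delicate point you flag (that conjugation intertwines the $\sigma$- and $\overline\sigma$-eigenspaces because the $F$-action is defined on the rational vector space $V$) is fine as you argue it. To make the proposal a complete proof of the lemma you must supply the two missing items above.
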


\begin{proof}
    Fix an embedding $\bar{K} \hookrightarrow \CC$.
    Recall that for any extension $L/K$ we have a homomorphism of $\QQ$-algebras $\End^0(X_L) \rightarrow \End(H^0(X_L, \Omega^1_{A_L}))$ \cite[\S 2.8]{Shi98}.
    The base change of this representation to $\CC$, i.e., $\End^0(X_\CC) \rightarrow \End(H^0(X_\CC, \Omega^1_{X_\CC}))$ is equivalent to the analytic representation $\rho_\CC \colon \End^0(X_\CC) \rightarrow \M_g(\CC)$ \cite[\S 3.2]{Shi98}.
    The rational representation $\rho_\QQ \colon \End^0(X_\CC) \rightarrow \M_{2g}(\QQ)$ is equivalent to $\rho_\CC \oplus \bar{\rho}_\CC$ \cite[\S 3.2]{Shi98}.

Suppose $\iota$ is a unital algebra embedding.
    Then \cite[\S5 Lemma 1]{Shi98} gives us that  
    $$
    \rho_\QQ|_{\iota(F)} \simeq \bigoplus_{\sigma}\sigma^{\oplus h}
    $$ 
    where $\sigma$ runs over the embeddings of $F$ into $\C$ and $h=2g/[F:\Q]$.    

    Conversely, suppose that the action of $\iota(F)$ on the regular differentials is given by a direct sum of embeddings $F \hookrightarrow \C$.
    Then the same is true for $\rho_\QQ$ restricted to $\iota(F)$.
    Let $\alpha$ be a primitive element of $F$ and let $\tau$ denote $\iota(\alpha)$.
    The subalgebra $\QQ(\tau)$ of $\End^0(X_\kbar)$ generated by $\tau$ is isomorphic to $\QQ[t]/\langle m_\tau \rangle$ where $m_\tau$ is the minimal polynomial of $\tau$.
    As $\rho_\QQ$ is a faithful representation of $\End^0(X_\kbar)$, the minimal polynomial of $\rho_\QQ(\tau)$ is equal to $m_\tau$.
    The restriction of $\rho_\QQ$ to $\iota(F)$ being a direct sum of embeddings implies the minimal polynomial of $\rho_\QQ(\tau)$ is equal to that of $\alpha$.
    Thus $\QQ(\tau) \simeq \QQ[t]/\langle m_\tau \rangle \simeq F$ and so $\iota(1) = \mathrm{Id}_{X_\kbar}$.

    In particular, the eigenvalues of an element in $\iota(F\setminus \Q)$ acting on the regular differentials are conjugates of this element, and so no element in $\iota(F \setminus\QQ)$ sends a regular differential of $X_\bK$ to a rational multiple of itself.

     Since $[F\colon \Q]$ does not divide $g$, we see that there exist $n_\sigma\in \Z$, not all equal, such that
    $$
    \rho_\CC |_{\iota(F)}\simeq \bigoplus_{\sigma}\sigma^{ \oplus n_\sigma}.
    $$
    On the other hand, as $\rho_\CC \oplus \bar{\rho}_\CC\simeq \rho_\QQ$, we find $n_\sigma+n_{\overline \sigma}=h$ for all $\sigma$. We deduce the existence of some $\sigma$ for which $\sigma\not=\overline \sigma$, and hence $F$ cannot be totally real.
  
\end{proof}

\begin{remark}
Let $X$ and $F$ be as in the previous lemma and suppose that $\iota(F)$ lies in the centre of $\End^0(X_\kbar)$. By \cite[\S5 Proposition 5]{Shi98}, then $F$ is either totally real or a CM field. By the previous lemma, if $[F\colon \Q]$ does not divide $g$, then $F$ must be a CM field.
\end{remark}
 
\begin{remark}\label{remark: product of iotas} 
Let $\varphi\colon X\rightarrow Y\times Z$ be an isogeny, where $X$, $Y$, and $Z$ are abelian varieties defined over $K$, with $\Hom(Y_\kbar,Z_\kbar)$ possibly nonzero. Let $F$ be a finite extension of $\QQ$.
Suppose there is a unital algebra embedding $\iota_X \colon F \hookrightarrow \End^0(X_\kbar)$ and algebra embeddings $\iota_Y \colon F \hookrightarrow \End^0(Y_\kbar)$ and $\iota_Z \colon F \hookrightarrow \End^0(Z_\kbar)$
making the diagram
$$
\xymatrix{
F \ar[r]^{\iota_X\hspace{5mm}} \ar[rd]_{\iota_Y \times \iota_Z} &   \End^0(X_\kbar) \\
& \End^0(Y_\kbar)\times \End^0(Z_\kbar)\ar@{^{(}->}[u]
}
$$
commutative, where the vertical arrow is the injection induced by $\varphi$. Then $\iota_Y$ and $\iota_Z$ are unital. 
\end{remark} 
 
\begin{example}
\label{example:hyperelliptic_embedding_not_an_algebra_embedding}
    Let $C$ denote the smooth projective curve determined by the smooth affine model $y^2 = x^8 +11x^4+3$.
    The order four automorphism $(x,y) \mapsto (ix,y)$ induces an automorphism on $C$ and $\Jac(C)$.
    We thus have an element $\tau$ of order 4 contained in $\End^0(\Jac(C)_\Qbar)$.

    

    The minimal polynomial of $\tau$ divides $t^4-1$ and hence the subalgebra $\QQ[\tau]$ generated by $\tau$ is isomorphic to a quotient of $\Q[t]/\langle t^4-1\rangle \simeq \Q(i) \times \Q \times \Q$ (with $\Q \cdot \mathrm{Id}_{\Jac(C)_\Qbar}$ sitting diagonally in such a product).
    Moreover, the $\Q(i)$ factor is not killed in this quotient, since $\tau$ has order 4.
    Thus we obtain an algebra embedding $\Q(i) \hookrightarrow \End^0(\Jac(C)_\Qbar)$.
    
    A basis of the regular differentials of $C$ is given by $\frac{dx}{y},\frac{xdx}{y},\frac{x^2dx}{y}$.
    Using this basis, one easily sees the eigenvalues of $\tau$ are $i,-1,-i$ respectively.
    The correspondence between the differentials of $C$ and $\Jac(C)$ allows us to see that any element in $\QQ[\tau]$ will have at least one rational eigenvalue.
    The above lemma applies to show that $\Q(i) \hookrightarrow \End^0(\Jac(C)_\Qbar)$ is not a unital algebra embedding.
\end{example}

\begin{example}
    Let $C$ denote the smooth projective curve determined by the affine smooth model $y^4 = x^3-x+1$.
    The order four automorphism $(x,y) \mapsto (x,iy)$ induces an automorphism $\tau$ on $C$ and $\Jac(C)$.
    As in the previous example, we obtain an algebra embedding $\Q(i) \hookrightarrow \End^0(\Jac(C)_\Qbar)$.
    
    A basis of the regular differentials of $C$ is given by $\frac{dx}{y^3},\frac{dx}{y^2},\frac{xdx}{y^3}$.
    Considering the action of $\tau$ on this basis, we deduce as in the above example that $\Q(i) \hookrightarrow \End^0(\Jac(C)_\Qbar)$ is not a unital algebra embedding.
\end{example}

From now on, we assume, without loss of generality, that $M$ is an imaginary quadratic field.

\begin{proposition}\label{proposition: prel1}
Let $A$ be an abelian threefold defined over $K$ with potential imaginary multiplication by $M$.
The following hold:
\begin{enumerate}[i)]
\item If $\iota(M)\subseteq  \End^0(A)$, then $M\subseteq K$.
    \item Suppose that $\End^0(A_\bK)$ is a field. If $M\subseteq K$, then $\iota(M)\subseteq  \End^0(A)$.
\end{enumerate}
\end{proposition}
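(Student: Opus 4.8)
The plan is to treat the two implications separately, using in both the description of the action of $\iota(M)$ on regular differentials furnished by Lemma \ref{lemma: algebra embedding}. First I would fix an embedding $\bK \hookrightarrow \CC$ and a generator $\alpha \in M$ with $\overline\alpha = -\alpha$, so that $\alpha^2 \in \Q$. By Lemma \ref{lemma: algebra embedding}, the action of $\iota(\alpha)$ on $H^0(A_\bK, \Omega^1_{A_\bK})$ has eigenvalue $\sigma(\alpha)$ with multiplicity $r_\sigma$ and eigenvalue $\overline\sigma(\alpha) = \overline{\sigma(\alpha)}$ with multiplicity $r_{\overline\sigma}$, where $(r_\sigma, r_{\overline\sigma})$ is the signature; since $[M:\Q]=2$ does not divide $g=3$, the last part of Lemma \ref{lemma: algebra embedding} gives $r_\sigma \neq r_{\overline\sigma}$. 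This inequality will be the crucial input for both parts, and using it in place of the explicit value $(2,1)$ avoids invoking Proposition \ref{proposition: prel2}.

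For part i), I would assume $\iota(\alpha) \in \End^0(A)$. Then $\iota(\alpha)$ acts $\QQ$-linearly, hence $K$-linearly, on the $K$-vector space $H^0(A, \Omega^1_{A/K})$, so the trace of this action lies in $K$; after extending scalars to $\CC$ this trace equals $r_\sigma\, \sigma(\alpha) + r_{\overline\sigma}\, \overline{\sigma(\alpha)}$. Because $\sigma(\alpha) + \overline{\sigma(\alpha)} = \Tr_{M/\Q}(\alpha) = 0$, this simplifies to $(r_\sigma - r_{\overline\sigma})\sigma(\alpha)$. As $r_\sigma - r_{\overline\sigma}$ is a nonzero rational number, I conclude $\sigma(\alpha) \in K$, and since $\sigma(\alpha)$ generates $\sigma(M) = M$ this yields $M \subseteq K$.

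For part ii), I would assume $\End^0(A_\bK)$ is a field and $M \subseteq K$, and aim to show that every $\tau \in G_K$ fixes $\iota(\alpha)$, since $\End^0(A) = \End^0(A_\bK)^{G_K}$. The element $\iota(\alpha^2) \in \Q$ is central and Galois-invariant, so $\iota(\alpha)^\tau$ is again a square root of $\iota(\alpha^2)$ in the field $\End^0(A_\bK)$; as a field of characteristic zero contains at most two square roots of any element, $\iota(\alpha)^\tau = \pm\iota(\alpha)$. It remains to exclude the minus sign, and here the signature asymmetry re-enters. Exploiting the $\tau$-semilinear action of $G_K$ on $H^0(A_\bK, \Omega^1_{A_\bK})$ together with the relation $(\iota(\alpha)^\tau)^* = \tau \circ \iota(\alpha)^* \circ \tau^{-1}$, the eigenvalues of $\iota(\alpha)^\tau$ on differentials are the $\tau$-images of those of $\iota(\alpha)$; since $M \subseteq K$ forces $\tau$ to fix both $\sigma(\alpha)$ and $\overline{\sigma(\alpha)}$, these eigenvalues are unchanged, i.e.\ $\sigma(\alpha)$ with multiplicity $r_\sigma$ and $\overline{\sigma(\alpha)}$ with multiplicity $r_{\overline\sigma}$.

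On the other hand, were $\iota(\alpha)^\tau = -\iota(\alpha)$, the eigenvalues would instead be $-\sigma(\alpha) = \overline{\sigma(\alpha)}$ with multiplicity $r_\sigma$ and $-\overline{\sigma(\alpha)} = \sigma(\alpha)$ with multiplicity $r_{\overline\sigma}$. Comparing the multiplicity of $\sigma(\alpha)$ across the two computations would force $r_\sigma = r_{\overline\sigma}$, contradicting $r_\sigma \neq r_{\overline\sigma}$. Hence $\iota(\alpha)^\tau = \iota(\alpha)$ for all $\tau \in G_K$, so $\iota(\alpha)$, and therefore all of $\iota(M)$, lies in $\End^0(A)$. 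I expect the main obstacle to be the careful bookkeeping of the semilinear Galois action on differentials in part ii): verifying that the eigenvalues transform by $\tau$ rather than by complex conjugation is the step that genuinely uses the compatibility between $A$, the field of definition of its endomorphisms, and the chosen comparison embedding, and it is precisely where the hypotheses ``$\End^0(A_\bK)$ is a field'' and ``$M \subseteq K$'' are both indispensable.
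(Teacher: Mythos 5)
Your proof is correct and follows the same skeleton as the paper's --- both parts rest on the $K$-rationality of a linear-algebra invariant of $\iota(\alpha)$ acting on regular differentials, together with the parity fact $r_\sigma+r_{\overline\sigma}=3$ (hence $r_\sigma\neq r_{\overline\sigma}$) --- but you deviate at two points in ways worth recording. In part i) you use the trace of $\iota(\alpha)$ on $H^0(A,\Omega^1_{A/K})$, which after the normalisation $\overline\alpha=-\alpha$ equals $(r_\sigma-r_{\overline\sigma})\,\sigma(\alpha)$ with a visibly nonzero integer coefficient; the paper instead passes to $\wedge^3H^0(A,\Omega^1_{A})$ and argues that the product of three conjugates of $\alpha$ lies in $M\setminus\Q$, which needs the same parity observation but leaves it implicit. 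In part ii) the paper shows that $G_K$ preserves $\iota(M)$ by splitting on $[\End^0(A_{\bK}):\Q]\in\{2,6\}$ and citing [FKS25] for the sextic case; your replacement --- $\iota(\alpha)^\tau$ squares to the Galois-fixed rational element $\iota(\alpha^2)$, and a field contains at most two square roots of any element, so $\iota(\alpha)^\tau=\pm\iota(\alpha)$ --- is self-contained and arguably cleaner. The remainder of your part ii), namely the semilinear eigenvalue computation (which uses $M\subseteq K$ to see that $\tau$ fixes the eigenvalues) and the multiplicity comparison ruling out the minus sign, is precisely the paper's argument, phrased via multiplicities rather than via the observation that an order-two permutation of a three-element set must have a fixed point. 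One small citation nit: $r_\sigma\neq r_{\overline\sigma}$ is immediate from $r_\sigma+r_{\overline\sigma}=3$, whereas the last assertion of Lemma \ref{lemma: algebra embedding} states only that $M$ is not totally real, so you should invoke the parity directly rather than that lemma.
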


\begin{proof}        
    Take $\alpha \in M\setminus \Q$.
    Suppose first that $\iota(M)\subseteq \End^0(A)$.
    We shall show $K$ contains $M$.
    By functoriality, $\iota(M)$ acts on $\wedge^3 H^0(A, \Omega^1_{A})$.
   By Lemma \ref{lemma: algebra embedding}, the action of $\alpha$ on this 1 dimensional $K$-vector space is given by multiplication by the product of 3 conjugates of $\alpha$, hence by an element of $M\setminus \Q$.
    In particular, if $\iota(M) \subseteq \End^0(A)$, then $K$ must contain $M$.

    Let us suppose now that $\End^0(A_\bK)$ is a field. 
    As $M \hookrightarrow\End^0(A_\bK)$, the field $\End^0(A_\bK)$ has either degree 2 or 6 \cite[Page 182, Corollary]{Mum}.
    The group $\GK$ acts on $\End^0(A_\bK)$ by $\QQ$-linear automorphisms.
    We claim that this action preserves $\iota(M)$.
    This is obvious if $\End^0(A_\bK)$ has degree 2 and it is implied by the discussion following \cite[Remark 4.5]{FKS25} if $\End^0(A_\bK)$ has degree 6.
    In particular, we obtain a homomorphism
    \[ \rho\colon \GK\rightarrow \Aut(\End^0(A_{\bK})/\Q) \rightarrow \Gal(\iota(M)/\QQ)\simeq \Gal(M/\Q),\]
    for which the central arrow is restriction. We will let $s\in G_K$ act on $M$ via the above homomorphism, so that the formula $\acc s\iota(\alpha) = \iota(\acc {\rho(s)} \alpha)$ holds.
    We want to show that if $M\subseteq K$, then $\rho$ is trivial.
    For this, it suffices to show that $\acc {\rho(s)} \alpha=\alpha$ for all $s\in G_K$.
        
    Given $\tau \in \End(A_{\bK})$, we let $P_\tau(T)$ denote the characteristic polynomial of $\tau$ acting on $H^0(A_{\bK}, \Omega^1_{A_{\bK}})$. Let $\alpha\in M\setminus \Q$. Since $M\subseteq K$, we have ${}^sP_{\iota(\alpha)}(T)=P_{\iota(\alpha)}(T)$. From the definition of $\rho$, we also have ${}^sP_{\iota(\alpha)}(T)=P_{\iota({}^{\rho(s)}\alpha)}(T)$. Hence $\rho(s)$ induces a permutation of order at most $2$ on the set of roots of $P_{\iota(\alpha)}(T)$. Since $P_{\iota(\alpha)}(T)$ has degree three, one of the roots must be fixed by the action of $\rho(s)$. This means that $\sigma({}^{\rho(s)}\alpha)=\sigma(\alpha)$ for some embedding $\sigma \colon M\hookrightarrow \bK$, which implies $\rho(s)$ is trivial. 
\end{proof}

The below proposition may be deduced from a result due to Shimura; see \cite{Shi63} or \cite[Exercise 9.10 (3)]{BH04}.
However, for the convenience of the reader, we shall give a direct elementary proof.

\begin{proposition}[Shimura]
\label{proposition: prel2}
Let $A$ be an abelian threefold defined over $K$ with potential imaginary multiplication by $M$.
Suppose $\End^0(A_\kbar) \not \simeq \M_3(M)$. 
Then $(A,\iota)$ has signature $(2,1)$. 

In particular, if $\End^0(A_{\kbar})$ is a quadratic field, then $\End^0(A_\kbar)$ is isomorphic to $M$ and $(A,\iota)$ has signature $(2,1)$.
\end{proposition}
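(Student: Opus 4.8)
The plan is to prove the contrapositive of the first assertion by pinning down the two available signatures and analyzing the degenerate one. Since $r_\sigma+r_{\overline{\sigma}}=\dim_{\kbar}H^0(A_{\kbar},\Omega^1_{A_{\kbar}})=3$ and we normalize $r_\sigma\geq r_{\overline{\sigma}}$, the only possibilities are $(r_\sigma,r_{\overline{\sigma}})\in\{(3,0),(2,1)\}$; Lemma \ref{lemma: algebra embedding} applies because $\iota$ is unital and $[M:\Q]=2$ does not divide $3$, which is consistent with either case. Everything thus reduces to showing that signature $(3,0)$ forces $\End^0(A_{\kbar})\simeq\M_3(M)$.

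Fix an embedding $\bK\hookrightarrow\CC$, so $\End^0(A_{\kbar})=\End^0(A_\CC)=:D$, and set $V:=H_1(A_\CC,\Q)$. As recalled in the proof of Lemma \ref{lemma: algebra embedding}, $D$ carries a faithful analytic representation $\rho_\CC\colon D\to\M_3(\CC)$ on the tangent space, with $\rho_\Q\simeq\rho_\CC\oplus\overline{\rho}_\CC$ on $V\otimes\CC$. If the signature is $(3,0)$, then $\rho_\CC|_{\iota(M)}\simeq\sigma^{\oplus 3}$, so for $\alpha\in M$ the matrix $\rho_\CC(\iota(\alpha))=\sigma(\alpha)\cdot\mathrm{Id}$ is scalar, hence central in $\M_3(\CC)$. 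Faithfulness of $\rho_\CC$ then shows $\iota(\alpha)$ is central in $D$, so $\iota(M)$ lies in the center of $D$. Consequently $V$ is a $3$-dimensional $M$-vector space via $\iota$, and $D$, commuting with $\iota(M)$, satisfies $D\subseteq\End_M(V)\simeq\M_3(M)$.

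For the reverse inclusion I would use $D=\End_{\mathrm{Hodge}}(V)$ together with the fact that signature $(3,0)$ identifies the Hodge filtration with an $M$-eigenspace. Writing $V\otimes\CC=V_\sigma\oplus V_{\overline{\sigma}}$ for the two $M$-eigenspaces (each of dimension $3$), the condition $r_{\overline{\sigma}}=0$ forces the $(1,0)$-part $V^{1,0}$ to equal $V_\sigma$. Any $M$-linear endomorphism preserves $V_\sigma=V^{1,0}$ and is therefore a Hodge endomorphism, giving $\End_M(V)\subseteq D$ and hence $D\simeq\M_3(M)$, as desired. For the ``in particular'' clause, if $\End^0(A_{\kbar})$ is a quadratic field $F$, then $\iota\colon M\hookrightarrow F$ and $[F:\Q]=2=[M:\Q]$ force $F=\iota(M)\simeq M$; since a quadratic field is not isomorphic to $\M_3(M)$ (compare $\Q$-dimensions $2$ and $18$), the first part yields signature $(2,1)$.

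The main obstacle is the reverse inclusion $\End_M(V)\subseteq D$: the containment $D\subseteq\End_M(V)$ and the centrality of $\iota(M)$ are formal, but concluding that \emph{every} $M$-linear map is an honest endomorphism requires the geometric input that in signature $(3,0)$ the complex structure is pinned down by the $M$-action, i.e.\ $V^{1,0}$ is exactly the $\sigma$-eigenspace. If one wishes to keep the proof elementary and avoid invoking Hodge structures, the same conclusion can be reached by exhibiting $A_\CC$ as isogenous to the cube of a CM elliptic curve with CM type $\sigma$, whose endomorphism algebra is manifestly $\M_3(M)$.
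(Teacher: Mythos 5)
Your proof is correct, and its overall frame --- prove the contrapositive by showing that signature $(3,0)$ forces $\End^0(A_{\kbar})\simeq \M_3(M)$ --- is exactly the paper's. You diverge at the key step. The paper, after noting that the analytic representation restricted to $\cO_M$ is scalar, applies the structure theorem for finitely generated torsion-free modules over the Dedekind domain $\cO_M$ to write the period lattice as $\rho_\CC(I_1)x_1+\rho_\CC(I_2)x_2+\rho_\CC(I_3)x_3$ and builds an explicit biholomorphism $\CC^3/\Lambda\simeq \CC/\sigma(I_1)\times\CC/\sigma(I_2)\times\CC/\sigma(I_3)$, exhibiting $A_\CC$ as a product of three isogenous CM elliptic curves and reading off $\M_3(M)$; this keeps the argument elementary, which is the paper's stated aim. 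You instead deduce centrality of $\iota(M)$ from faithfulness of $\rho_\CC$, get the formal inclusion $D\subseteq\End_M(V)\simeq\M_3(M)$, and for the reverse inclusion use that $r_{\overline\sigma}=0$ forces $V^{1,0}$ to be the full $\sigma$-eigenspace, so every $M$-linear map preserves the Hodge filtration and hence lies in $D$ by the equivalence $\End^0(A_\CC)=\End_{\mathrm{Hodge}}(H_1(A_\CC,\Q))$. Both arguments are valid; yours is shorter but imports Riemann's equivalence for complex tori, whereas the paper's lattice decomposition is self-contained --- and is precisely the ``alternative'' you sketch in your closing sentence. One minor point: the appeal to Lemma \ref{lemma: algebra embedding} in your first paragraph is needed only to know that the $M$-action on the regular differentials diagonalises into embeddings (so that $r_\sigma+r_{\overline\sigma}=3$); the ``does not divide $g$'' clause plays no role there.
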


\begin{proof}
Fix an embedding $\bK \hookrightarrow \CC$.
To simplify notation, we shall write $M$ in place of $\iota(M)$.
    We shall assume the signature of $A_\CC$ is $(3,0)$ and prove $\End^0(A_\kbar)  \simeq \M_3(M)$.
    As  the signature of $(A,\iota)$ is either $(3,0)$ or $(2,1)$, this amounts to proving the contrapositive of the given statement.
    
    We may write $A_\CC(\CC) = \CC^3/\Lambda$ for some lattice $\Lambda$ in $\CC^3$.
    Let $\rho_\CC$ denote the analytic representation of $\End(A_\CC)$.
    As the signature of $A_\CC$ is $(3,0)$, there is an embedding $\sigma \colon M \hookrightarrow \CC$ such that for any basis of $\CC^3$, $\alpha \in \End(A_\CC)$, and $z = (u,v,w) \in \CC^3/\Lambda$, we have
    \[\rho_\CC(\alpha)z = (\sigma(\alpha)u,\sigma(\alpha)v,\sigma(\alpha)w) = \sigma(\alpha)z.\]

    After possibly replacing $A_\CC$ by an isogenous abelian variety, we may assume $M \cap \End(A_\CC) = \OO_M$, where $\OO_M$ is the maximal order in $M$.    
    The lattice $\Lambda$ is a finitely generated torsion-free $\OO_M$-module under $\rho_\CC|_{\OO_M}$.
    By the structure theorem for such modules over Dedekind domains (see \cite[Proposition 10.2.1]{Bro}), we may write $\Lambda = \rho_\CC(I_1)x_1+\rho_\CC(I_2)x_2+\rho_\CC(I_3)x_3$ for some ideals $I_1,I_2,I_3$ of $\OO_M$ and $x_1,x_2,x_3 \in \CC^3$.
    As remarked above, we have $\rho_\CC(\alpha) x_i = \sigma(\alpha)x_i$.
    Since $\RR \otimes \sigma(I_i) \simeq \CC$, we see that \[\left(\RR \otimes \rho_\CC(I_i)x_i\right)/\rho_\CC(I_i)x_i \simeq \left(\RR \otimes \sigma(I_i)x_i\right)/\sigma(I_i)x_i \simeq \CC/\sigma(I_i).\]

    Define a $\CC$-linear map $T\colon \CC^3 \rightarrow \CC^3$ by $x_i \mapsto e_i$ (where $e_i$ is 1 in the $i$-th component and zero elsewhere).
    Since $\Lambda$ is a lattice in $\CC^3$, the vectors $x_1,x_2,x_3$ are linearly independent over $\CC$, and thus $T$ is well defined and invertible.
    It follows that $T$ induces a biholomorphic map
    \[\CC^3/\Lambda \rightarrow \CC/\sigma(I_1) \times \CC/\sigma(I_2) \times \CC/\sigma(I_3).\]
    The $\CC/\sigma(I_i)$ are isogenous elliptic curves with complex multiplication by $\OO_M$ (see for example \cite[Chapter II, \S 1]{Sil94}, or \cite[Chapter II, Theorem 2]{Shi98} and its corollary).
    Hence $\End^0(A_\CC) \simeq \M_3(M)$
\end{proof}

\begin{remark}
Note that the converse of the above proposition does not hold.
Indeed there are both central and non-central unital algebra embeddings of $M$ into $\M_3(M)$.
Thus for any abelian threefold $A$ with $ \End^0(A_\kbar) \simeq \M_3(M)$, we may equip $A$ with multiplication by $M$ of signature $(3,0)$ or $(2,1)$.
Together with the above proposition, this implies that for every abelian threefold $A$ with potential multiplication by $M$ there exists a unital algebra embedding $\iota\colon M\hookrightarrow \End^0(A_\kbar)$ such that $(A,\iota)$ has signature $(2,1)$.
In Theorem \ref{theorem: maintheorem}, we assume that $\iota(M)\subseteq \End^0(A_{KM})$.
We emphasise that different choices of $\iota$ may be defined over different base fields; see Examples \ref{example: FS} and \ref{example: restriction of scalars}.
\end{remark}

The following provides an example of a non-central unital algebra embedding occurring in `nature'.

\begin{example}\label{example: FS}
    Let $C$ denote the smooth projective curve defined over $\Q$ determined by $y^2 = x^7-5x$. Let $\tau \in \End^0(\Jac(C)_{\Qbar})$ be the endomorphism induced by the map $(x,y) \mapsto (-x,iy)$. Write $M=\Q(i)$ and consider the algebra embedding $\iota \colon M \hookrightarrow \End^0(\Jac(C)_{\Qbar})$ defined by $\iota(i)=\tau$.
    
    A basis of the regular differentials of $y^2 = x^7-5x$ is given by $\frac{dx}{y},\frac{xdx}{y},\frac{x^2dx}{y}$.
    Moreover, this is a basis of eigenvectors for $\tau$, having $i,-i,i$ as respective eigenvalues.
    By Lemma \ref{lemma: algebra embedding}, the algebra embedding $\iota$ is unital and of signature $(2,1)$.
    Also, by \cite[Lemma 5.6]{FS16}, we have $\End^0(\Jac(C)_\Qbar) \simeq \M_3(M)$.
    
    Note that $C$ is generic in the sense of \cite[Definition 5.7]{FS16}.
    Hence, from \cite[Figure 6, Figure 7]{FS16}, we see that the $\Q$ dimension of $\End^0(\Jac(C)_M)$ is 4.
    The proof of \cite[Lemma 5.6]{FS16} provides an explicit isogeny $\varphi$ over $\Q$ from $\Jac(C)$ to $Y\times Z$, where $Y$ is an elliptic curve and $Z$ is an abelian surface, both defined over $\Q$.
    It is straightforward to verify that $\iota$ induces via $\varphi$ unital algebra embeddings $\iota_Y\colon  M \hookrightarrow \End^0(Y_M)$ (resp. $\iota_Z \colon M \hookrightarrow \End^0(Z_M)$) yielding a commutative diagram as in Remark \ref{remark: product of iotas}.
    We deduce that 
\begin{equation}\label{equation: algebra dec example}
\End^0(\Jac(C)_M)\simeq \End^0(Y_M)\times \End^0(Z_M),
\end{equation}
where $M \simeq\End^0(Y_M)$ and $M \simeq\End^0(Z_M)$.
An explicit computation with $\varphi$ shows that $(Y,\iota_Y)$ has signature $(0,1)$ and $(Z,\iota_Z)$ has signature $(2,0)$.
Let $\iota'\colon M\hookrightarrow \End^0(\Jac(C)_M)$ be the unital algebra embedding corresponding to $\iota_Y(\overline \cdot)\times \iota_Z$ under the identification \eqref{equation: algebra dec example}.
Then $(\Jac(C),\iota')$ has signature $(3,0)$.

We conclude this example by noting, however, that there must exist a unital algebra embedding $\theta\colon M\hookrightarrow \End^0(\Jac(C)_\Qbar)$ of signature $(2,1)$ such that $\theta(M)\not\subseteq \End^0(\Jac(C)_M)$.
Indeed, one may easily see that the subalgebra generated by the images of all such embeddings contains $M\times M \times M$, which is incompatible with~\eqref{equation: algebra dec example}.
\end{example}

\begin{example}\label{example: restriction of scalars}
Let $M$ be an imaginary quadratic field of class number $3$.
Then there exists an elliptic curve $E$ defined over a cubic extension of $\QQ$ such that $E$ has potential complex multiplication by $M$.
Let $A$ be Weil's restriction of scalars of $E$ down to $\Q$. Then $\End^0(A_{\Qbar})\simeq \M_3(M)$ and thus there exists a unital algebra embedding $\iota \colon M \hookrightarrow \End^0(A_{\Qbar})$ of signature $(2,1)$. By Theorem \ref{theorem: maintheorem}, if $K/\Q$ is such that $\iota(M)\subseteq \End^0(A_{KM})$, then necessarily $[K\colon\Q]\geq 3$. 
\end{example}

\section{Proof of Theorem \ref{theorem: maintheorem}: case $i)$}\label{section: case i}

We start by recalling the $\lambda$-adic representations that appeared in \S\ref{section: introduction}, and some of their basic properties that will be used in this and the following section. Let $X$ be an abelian variety defined over $K$, and $F$ a number field for which there exists a unital algebra embedding
\begin{equation}\label{equation: embedding theta}
\theta\colon F \hookrightarrow \End^0(X).
\end{equation} 
 For a prime $\ell$, we also denote by $\theta$ the action of $F$ on $V_\ell(X)$ obtained by precomposing $\End^0(X) \rightarrow \End(V_\ell(X))$ with the unital algebra embedding of (\ref{equation: embedding theta}).
 For every prime $\lambda$ of $F$ over $\ell$, define 
$$
V_\lambda(X)\coloneqq V_\lambda(X,\theta)\coloneqq V_\ell(X)\otimes_{\theta(F)\otimes \Q_\ell,\sigma_\lambda}F_\lambda.
$$ 
Here, $\sigma_\lambda\colon F\hookrightarrow F_\lambda$ is the natural inclusion and the tensor product is taken with respect to the projection map $\theta(F)\otimes \Q_\ell\rightarrow F_\lambda$ which renders the diagram
$$
\xymatrix{
\theta(F)\otimes \Q_\ell \ar[r]&   F_\lambda\\
F \ar[u]_{\theta} \ar[ru]_{\sigma_\lambda} &
}
$$
commutative.
It is convenient to note that $V_\lambda(X,\theta)$ depends on $\theta$, as will be the case for the various Hecke characters that we will soon attach to the system $\{V_\lambda(X,\theta)\}_\lambda$. 

Let $\varrho_\ell\coloneqq \varrho_{X,\ell}$ and $\varrho_\lambda:=\varrho_{X,\lambda}$ respectively denote the representations of $G_K$ afforded by $V_\ell(X)$ and $V_\lambda(X)$.
Let $\p$ be a prime of $K$ of good reduction for $X$ and let $\Frob_\p$ denote a(n arithmetic) Frobenius element at $\p$.
By \cite[Theorem 2.1.2]{Rib76}, there exists $a_\p\in F$ independent of $\lambda$ and $\ell$ such that
$\Tr(\varrho_\lambda(\Frob_\p))=\sigma_\lambda(a_\p)$.
One says that $\{ V_\lambda(A)\}_\lambda$ is a compatible system of $F_\lambda$-adic representations. 
It is shown in the course of the proof of \cite[Theorem 2.1.1]{Rib76} that
\begin{equation}\label{equation: l adic in lambda adic}
V_\ell(X)\simeq \bigoplus_{\lambda \mid \ell}\Res_{F_\lambda/\Q_\ell}V_\lambda(X)
\end{equation}
as $\Q_\ell[G_K]$-modules. 

Assume now that $F=M$ is an imaginary quadratic field. We will denote by $\overline \cdot$ the the nontrivial automorphism of $M$. Recall that $\varphi_\lambda:M_{\overline \lambda} \rightarrow M_{ \lambda}$ denotes the $\Q_\ell$-isomorphism rendering the diagram
\begin{align*}
  \xymatrix{
M_{\overline \lambda}  \ar[r]^{\varphi_\lambda} & M_{ \lambda}\\
M \ar[r]^{\overline \cdot }\ar[u]^{\sigma_{\overline \lambda} } & M  \ar[u]_{\sigma_{ \lambda }}}
\end{align*}
commutative, where the vertical arrows are the natural inclusions. 
Writing $\overline V_{ \lambda}(X)$ to denote $V_{\overline \lambda}(X)$ viewed as an $M_\lambda[G_K]$-module via the isomorphism $\varphi_\lambda$, one finds
\begin{equation}\label{equation: conjugate traces}
\Tr(\Frob_\p |\overline V_\lambda (X))=\sigma_\lambda(\overline a_\p).
\end{equation}
From \eqref{equation: l adic in lambda adic}, we obtain
$$
\Tr(\varrho_\ell(\Frob_\p))= \sum_{\lambda\mid \ell} \Tr_{M_\lambda/\Q_\ell}\Tr(\varrho_\lambda(\Frob_\p))= \Tr_{M/\Q}(a_\p).
$$
By \cite[Satz 3]{Fal83}, the Tate module $V_\ell(X)$ is semisimple. Therefore, from the last two equalities and the Chebotaryov density theorem, we deduce an isomorphism
\begin{equation}\label{equation: lambda plus lambdabar}
V_\ell(X)\otimes_{\Q_\ell} M_\lambda \simeq V_\lambda(X) \oplus \overline V_{ \lambda}(X)
\end{equation}
of $M_\lambda[G_K]$-modules. 

Let $A$ be an abelian threefold defined over $K$ with potential multiplication by $M$ as in \S\ref{section: introduction}.
In this section, we assume that $\iota(M)\subseteq \End^0(A)$.
To simplify notation, we will write $W_\lambda$ (resp. $\overline W_\lambda$) in place of $\wedge^3V_\lambda(A)$ (resp. $\wedge^3\overline V_\lambda(A)$).
As explained in \cite[Proposition 14, \S7]{Fit24}, the compatible system of $M_\lambda[G_K]$-modules $\{W_\lambda\}_\lambda$ is associated with an algebraic Hecke character (a Hecke character of type $A_0$ in the sense of \cite{Wei56}) of $K$ with values in $M$ and infinity type $r_\sigma\cdot \sigma\circ \Nm_{K/M} +r_{\overline \sigma}\cdot \overline\sigma\circ \Nm_{K/M}$, where $(r_\sigma,r_{\overline \sigma})$ is the signature of $A$.

\begin{theorem}\label{theorem: M in k}
Let $A$ be an abelian threefold defined over $K$ with imaginary multiplication by $M$.
Suppose that the pair $(A,\iota)$ has signature $(2,1)$.
Then there exists an elliptic curve $E$ defined over $K$ with complex multiplication by $M$ such that, for every prime $\lambda$ of $M$ over a rational prime $\ell$, there is an isomorphism
$$
\big(\wedge ^3V_\lambda(A)\oplus \wedge ^3\overline V_{\lambda}(A) \big)(-1)\simeq V_\ell(E)\otimes_{\Q_\ell}M_\lambda
$$
of $M_\lambda[G_K]$-modules. 
In particular, $M$ has class number $h_M\leq [K:\Q]/2$.
\end{theorem}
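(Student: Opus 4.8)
The plan is to apply Casselman's theorem to the rank-one compatible system obtained by Tate-twisting $\{W_\lambda\}_\lambda = \{\wedge^3 V_\lambda(A)\}_\lambda$, which, as recalled just above, is attached to an algebraic Hecke character of $K$ valued in $M$. Write $\psi$ for the algebraic Hecke character attached to $\{W_\lambda(-1)\}_\lambda$. The crux is the computation of its infinity type, and the key elementary observation is that, since $M \subseteq K$, every embedding $K \hookrightarrow \bK$ restricts on $M$ either to $\sigma$ or to $\overline\sigma$, whence
$$
\sigma\circ\Nm_{K/M} + \overline\sigma\circ\Nm_{K/M} = \Nm_{K/\Q}
$$
as formal sums of embeddings of $K$.

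First I would compute the infinity type of $\psi$. The system $\{W_\lambda\}_\lambda$ has infinity type $2\,\sigma\circ\Nm_{K/M} + \overline\sigma\circ\Nm_{K/M}$ because the signature is $(2,1)$, and Tate twisting by $(-1)$ removes the infinity type $\Nm_{K/\Q}$ of the cyclotomic system. Using the displayed identity,
$$
2\,\sigma\circ\Nm_{K/M} + \overline\sigma\circ\Nm_{K/M} - \Nm_{K/\Q} = \sigma\circ\Nm_{K/M},
$$
so $\psi$ is a weight-one algebraic Hecke character of $K$ with values in $M$ and infinity type exactly the reflex norm $\sigma\circ\Nm_{K/M}$, which one checks is a CM type of $K$. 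This is precisely where signature $(2,1)$ is indispensable: for signature $(3,0)$ the analogous computation returns $2\,\sigma\circ\Nm_{K/M} - \overline\sigma\circ\Nm_{K/M}$, which is not of reflex-norm type and is not attached to any elliptic curve.

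Next I would invoke Casselman's theorem: as $M \subseteq K$ and $\psi$ has infinity type $\sigma\circ\Nm_{K/M}$, there is an elliptic curve $E/K$ with complex multiplication by $M$ whose attached Hecke character is $\psi$. By Shimura--Taniyama theory the rank-one $M_\lambda[G_K]$-module $V_\lambda(E)$ is cut out by $\psi$, so $V_\lambda(E)\simeq W_\lambda(-1)$ (two continuous $M_\lambda^\times$-valued characters whose Frobenius traces agree on a density-one set of primes coincide, by Chebotarev), and likewise $\overline V_\lambda(E)\simeq \overline W_\lambda(-1)$. Feeding $E$ into the decomposition \eqref{equation: lambda plus lambdabar} then yields
$$
V_\ell(E)\otimes_{\Q_\ell} M_\lambda \simeq V_\lambda(E)\oplus \overline V_\lambda(E)\simeq \big(W_\lambda\oplus \overline W_\lambda\big)(-1),
$$
which is the asserted isomorphism.

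Finally, the class number bound follows formally from the existence of $E$. Since $E$ is defined over $K$, its $j$-invariant lies in $K$; by complex multiplication theory $M(j(E))$ is the ring class field of the order $\End(E)\subseteq \OO_M$, which contains the Hilbert class field $H$ of $M$. Hence $H \subseteq M(j(E)) \subseteq K$ and
$$
h_M = [H:M] \leq [K:M] = [K:\Q]/2.
$$
The step I expect to require the most care is not any single deep input but the bookkeeping of the infinity type together with the normalization of the Tate twist—pinning down the weight and the reflex embedding so that Casselman's theorem applies on the nose—since the association of the rank-one system to a Hecke character and Casselman's converse to the Shimura--Taniyama construction are both available off the shelf.
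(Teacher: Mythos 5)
Your proposal is correct and follows essentially the same route as the paper: compute that $\{W_\lambda(-1)\}_\lambda$ has infinity type $\sigma\circ\Nm_{K/M}$, apply Casselman's theorem (using $M\subseteq K$ from Proposition \ref{proposition: prel1}) to produce $E/K$, and match the conjugate factors via the compatible-system formalism. The only cosmetic difference is the final class-number step, where the paper first replaces $E$ by a $K$-isogenous curve with $\End\simeq\cO_M$ via R\'emond and uses $[\Q(j):\Q]=h_M$ together with $M\not\subseteq\Q(j)$, whereas you use the containment of the Hilbert class field in the ring class field $M(j(E))\subseteq K$; both give the same bound.
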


\begin{proof}
By the hypothesis, the compatible system $\{W_\lambda\}_\lambda$ is associated with an algebraic Hecke character of $K$ with values in $M$ and infinity type $2\cdot \sigma\circ \Nm_{K/M} +\overline\sigma\circ \Nm_{K/M}$.
The compatible system $\{W_\lambda(-1)\}_\lambda$ is hence associated to an algebraic Hecke character $\psi$ of infinity type $\sigma \circ \Nm_{K/M}$.
By Proposition \ref{proposition: prel1}, we have $M\subseteq K$ and then, by Theorem \ref{theorem: Casselman} below, there exists an elliptic curve $E$ defined over $K$ with complex multiplication by $M$ such that the compatible system $\{ V_\lambda(E)\}_\lambda$ is associated to $\psi$.
By \eqref{equation: lambda plus lambdabar} applied to $X=E$, we have that $V_\ell(E)\otimes_{\Q_\ell}M_\lambda\simeq V_\lambda(E)\oplus \overline V_{\lambda}(E)$ as $M_\lambda[G_K]$-modules.
By \eqref{equation: conjugate traces} respectively applied to $X=A$ and $X=E$, we have that $\overline \psi$ is associated to both $\{\overline W_{\lambda}(-1)\}_\lambda$ and $\{ \overline V_{\lambda}(E)\}_\lambda$.
The isomorphism of the statement of the theorem follows.

By \cite[Theorem 1.1]{Rem17} there exists an elliptic curve $E'$ defined over $K$ and $K$-isogenous to $E$ such that $\End(E')\simeq \cO_M$. Let $j$ denote the $j$-invariant of $E'$, so that $\Q(j)\subseteq K$.
We claim that $\Q(j)\subsetneq K$.
Indeed, by \cite[Chapter II, Theorem 4.3]{Sil94}, we have $[\Q(j):\Q]=[M(j):M]=h_M$, which in particular implies that $M\not\subseteq \Q(j)$. Now recall that $M\subseteq K$.
It follows that
$$
h_M=[\Q(j):\Q]\leq [K:\Q]/2,
$$
as desired.
\end{proof}

Below we state a simplified version of a theorem of Casselman that is used in the proof of the above theorem. It associates an abelian variety with complex multiplication to an algebraic Hecke character satisfying certain properties. Let us first recall how to attach an algebraic Hecke character to an abelian variety with complex multiplication. Let $X$, $F$, and $\theta$ be as in the beginning of this section, so that \eqref{equation: embedding theta} is satisfied.
Assume that $[F:\Q]=2\dim(X)$, that is, that $X$ has complex multiplication.
In this case, the signature of $(X,\theta)$ is of the form $\sum_{\sigma \in \Phi}\sigma$, where $\Phi\subseteq \Hom(F,\bK)$ is a CM type of $F$, that is, it satisfies $\Phi\cup \overline \Phi=\Hom(F,\bK)$.
Moreover, after taking an embedding $\bK\hookrightarrow \C$, one has $X_\C\simeq \C^{\dim(X)}/\Phi(\mathfrak a)$, where $\mathfrak a\subseteq F$ is a lattice. 

Note that $V_\lambda(X,\theta)$ is a 1-dimensional $F_\lambda$-vector space \cite[Theorem 2.1.1]{Rib76}.
By \cite[Theorem 2.1.3]{Rib76}, it is of Hodge--Tate type and hence locally algebraic by \cite[Theorem on III-7]{Ser89}. By \cite[Theorem 2 on III-13]{Ser89}, the system $\{ V_\lambda(X,\theta)\}_\lambda$ is associated with an algebraic Hecke character.
We call this the algebraic Hecke character attached to $(X,\theta)$.
Assume further that $F/\Q$ is Galois and that $\Phi$ is primitive, that is, if $g\in \Gal(F/\Q)$ satisfies $g\Phi=\Phi$, then $g$ is trivial.
In this case, one has that the infinity type of this Hecke character is $\sum_{\sigma\in \Phi} \sigma^{-1} \circ\Nm_{K/F}$ (in general, the infinity type is the reflex type of $\Phi$, but under the Galois and primitivity hypothesis the reflex type takes the stated form; see \cite[\S 8.4 Examples (I)]{Shi98}).

\begin{theorem}\label{theorem: Casselman}
Let $F\subseteq K$ be a CM field which is Galois over $\Q$, $\Nf$ a nonzero integral ideal of $K$, and denote by $I_\Nf$ the group of fractional ideals of $K$ coprime to $\Nf$.
Let $\psi\colon I_{\Nf}\rightarrow F^\times$ denote an algebraic Hecke character of $K$, with values in $F$, and conductor $\Nf$.
Suppose that the infinity type of $\psi$ is of the form $\sum_{\sigma\in \Phi} \sigma^{-1} \circ \Nm_{K/F}$, where $\Phi\subseteq \Hom(F,\bK)$ is a primitive CM type of $F$.
Then there exists an abelian variety $X$ defined over $K$ and an algebra embedding
$$
\theta\colon F \hookrightarrow \End^0(X)
$$ 
such that $[F:\Q]=2\dim(X)$, $X$ has CM type $\Phi$, the algebraic Hecke character attached to $(X,\theta)$ is $\psi$, and $X_\C\simeq \C^{\dim(X)}/\Phi(\cO_F)$ after taking an embedding $\bK\hookrightarrow \C$.
Moreover, any pair $(X',\theta')$ also satisfying these conditions is $K$-isomorphic to $(X, \theta)$.
\end{theorem}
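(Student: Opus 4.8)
The plan is to prove the classical dictionary between CM abelian varieties and algebraic Hecke characters in two stages: construct \emph{some} CM abelian variety over $K$ of type $\Phi$, then adjust its Hecke character to $\psi$ by a finite-order twist; uniqueness I would read off from the isogeny theorem.

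For existence, I would first identify the reflex field of $(F,\Phi)$. Since $F/\Q$ is Galois we may identify $\Hom(F,\bK)$ with $\Gal(F/\Q)$, and the reflex field $F^{*}$ is the fixed field of the stabiliser of $\Phi$ under this action; the primitivity hypothesis says precisely that this stabiliser is trivial, so $F^{*}=F\subseteq K$. Next I would form the complex torus $X_{0}=\C^{n}/\Phi(\cO_F)$, where $n=[F:\Q]/2$ and $\Phi\colon \cO_F\to\C^{n}$ sends $a$ to $(\sigma(a))_{\sigma\in\Phi}$; the CM type furnishes a Riemann form, so $X_{0}$ is an abelian variety with an action of $\cO_F$ of type $\Phi$. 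The main theorem of complex multiplication \cite{Shi98} then provides a model of $X_{0}$ over $F^{*}=F\subseteq K$, with $\cO_F$-action defined over $K$, whose associated algebraic Hecke character $\psi_{0}$ over $K$ has infinity type the reflex type $\sum_{\sigma\in\Phi}\sigma^{-1}\circ\Nm_{K/F}$; this last computation is exactly the one recorded in the paragraph preceding the theorem.

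By hypothesis $\psi$ and $\psi_{0}$ share this infinity type, so $\chi\coloneqq\psi\psi_{0}^{-1}$ is an algebraic Hecke character of infinity type $0$, hence of finite order and valued in the group $\mu_F\subseteq\cO_F^{\times}$ of roots of unity of $F$. Viewing $\chi$ through class field theory as a continuous character $\tilde\chi\colon\GK\to\mu_F\subseteq\Aut_K(X_{0})$, I would take $X$ to be the twist of $X_{0}$ by the cocycle $\tilde\chi$. As $\tilde\chi$ takes values in $\cO_F$-linear automorphisms, the twist inherits an $\cO_F$-action $\theta$ and satisfies $V_\lambda(X)\simeq V_\lambda(X_{0})\otimes\tilde\chi$, so that the Hecke character of $(X,\theta)$ becomes $\psi_{0}\chi=\psi$. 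Since $\tilde\chi$ is a coboundary over $\bK$, we have $X_{\bK}\simeq (X_{0})_{\bK}$, whence $X_\C\simeq\C^{n}/\Phi(\cO_F)$ and the CM type remains $\Phi$; this produces the pair $(X,\theta)$ claimed.

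For uniqueness, suppose $(X',\theta')$ satisfies the same conclusions. Then $V_\lambda(X)$ and $V_\lambda(X')$ are both associated to $\psi$, hence isomorphic as $F_\lambda[\GK]$-modules, and by \eqref{equation: l adic in lambda adic} the rational Tate modules $V_\ell(X)$ and $V_\ell(X')$ are isomorphic as $\Q_\ell[\GK]$-modules for every $\ell$. The isogeny theorem (Faltings) then yields an $F$-equivariant $K$-isogeny $\varphi\colon X\to X'$, which I would arrange to realise a fixed identification of the $\psi_\lambda$; since both $X$ and $X'$ have CM by the maximal order $\cO_F$ their integral Tate modules are free of rank one over $\cO_F\otimes\Z_\ell$ and $\varphi$ matches them, so $\varphi$ induces an isomorphism on every Tate module and is therefore an isomorphism compatible with the $F$-actions. \textbf{The hard part} of the whole argument is the existence stage, and specifically the main theorem of complex multiplication: descending the analytic $X_{0}$ to the reflex field and computing the infinity type of its Hecke character as the reflex type. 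The twist and uniqueness steps are comparatively formal, the points demanding care being that $\chi$ is valued in roots of unity of $F$ (so the twist exists over $K$ and preserves the $\cO_F$-action) and that the isogeny in the uniqueness step can be taken to respect the integral Tate modules.
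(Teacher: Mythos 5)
There is a genuine gap at the very first step of your existence argument. You assert that the main theorem of complex multiplication provides a model of $X_0=\C^n/\Phi(\cO_F)$ over the reflex field $F^*=F$. This is false in general: the main theorem of CM describes the action of $\Gal(\bK/F^*)$ on torsion points via the reflex norm, but the variety $(X_0,\cO_F)$ is only definable over its field of moduli, which is an abelian extension of $F^*$ whose degree is governed by a class group. Already for $F=M$ imaginary quadratic with $h_M>1$, the elliptic curve $\C/\cO_M$ has $j$-invariant of degree $h_M$ over $M$ and admits no model over $M=F^*$. The entire point of Casselman's theorem --- and the reason the paper can deduce $h_M\leq[K:\Q]/2$ from it --- is that the \emph{existence of the given character $\psi$ on $K$} forces $K$ to contain the field of moduli and supplies the descent. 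Your argument uses $\psi$ only at the twisting stage, after the descent has supposedly been done, so the hard implication is assumed rather than proved. To repair this you would have to show directly that the existence of an algebraic Hecke character of $K$ valued in $F$ with infinity type $\sum_{\sigma\in\Phi}\sigma^{-1}\circ\Nm_{K/F}$ implies $K$ contains the field of moduli of $(X_0,\cO_F)$, which is essentially the theorem itself. The paper instead quotes Casselman's theorem in the form of \cite[Chapter 5, Theorem 5.1]{Lan83} and reduces the proof to verifying the two idelic conditions $\psi_\A(s)\overline{\psi_\A(s)}=\Nm(s)$ and $\psi_\A(s)\cO_F=\prod_{\sigma\in\Phi}\sigma^{-1}(\Nm_{K/F}(s))\cO_F$, checking them on $K^\times$ and on uniformiser id\`eles by a reduction that uses finiteness of the class number; no twisting is needed.

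A secondary, more repairable issue is in your uniqueness step: an $F$-equivariant $K$-isogeny $X\to X'$ between varieties with CM by $\cO_F$ need not induce isomorphisms on integral Tate modules. The module $\Hom_{\cO_F}(X,X')$ is a fractional ideal of $\cO_F$, and it contains an isomorphism only when that ideal is principal; this is exactly where the normalisation $X'_\C\simeq\C^{\dim X'}/\Phi(\cO_F)$ must be invoked, which your sketch does not do. The paper handles uniqueness by appealing to \cite[Chapter 5, Theorem 5.2]{Lan83}, noting only that the polarisation appearing there can be forgotten.
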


\begin{remark}
By saying that two pairs $(X,\theta)$ and $(X',\theta')$ as in the previous theorem are $K$-isomorphic, we mean that there exists an isomorphism $X\simeq X'$ defined over $K$ inducing an isomorphism $\varphi\colon \End^0(X)\simeq \End^0(X')$ that makes the diagram
$$
\xymatrix{
F \ar[r]^{\theta \hspace{5mm}} \ar[rd]_{\theta'} &   \End^0(X) \ar[d]^{\varphi}\\
& \End^0(X')
}
$$
commutative.
We also note that by \cite[Chapter 1, Theorem 3.5]{Lan83}, under the Galois and primitivity hypothesis of the theorem, $\theta$ is in fact an isomorphism.
\end{remark}

\begin{proof}
Let $\A_K^\times$ denote the idèles of $K$. As stated in \cite[Chapter 0, \S 5]{Sch88}, there exists a unique continuous group homomorphism 
$$
\psi_\A:\A_K^\times \rightarrow F^\times
$$
such that:
\begin{enumerate}[i)]
\item $\psi_\A(a)=\prod_{\sigma\in \Phi}\sigma^{-1}(\Nm_{K/F}(a))$ for all $a \in K^\times$.
\item $\psi_\A(s_\p)=\psi(\p)$ for every prime ideal $\p$ of $K$ not dividing $\Nf$. Here, $s_\p$ is an idèle whose $\p$-th component is a uniformiser of the completion of $K$ at $\p$ and whose all other components are $1$.
\end{enumerate}
To show the existence statement in the theorem, by taking $\mathfrak a=\cO_F$ in \cite[Chapter 5, Theorem 5.1]{Lan83}, it suffices to verify that for every $s\in \A_K^\times$, we have
\begin{equation}\label{equation: Casselmans conditions}
\psi_\A(s)\cdot \overline {\psi_\A(s)}=\Nm(s),\qquad \psi_\A(s)\cO_F=\prod_{\sigma\in \Phi}\sigma^{-1}(\Nm_{K/F}(s))\cO_F.
\end{equation}
By the above, it suffices to check \eqref{equation: Casselmans conditions} when $s$ is an element in $K^\times$ and when $s$ is the idèle $s_\p$ for $\p$ not dividing $\Nf$. In the first case, \eqref{equation: Casselmans conditions} follows from i) and the fact that $\Phi$ is a CM type. Indeed, for $a\in K^\times$ we have
$$
\psi_\A(a)\cdot \overline {\psi_\A(a)}=\prod_{\sigma\in \Phi\cup \overline \Phi}\sigma^{-1}(\Nm_{K/F}(a))=\Nm_{F/\Q}\Nm_{K/F}(a)= \Nm(a).
$$
When $s=s_\p$, then \eqref{equation: Casselmans conditions} can be recast as
\begin{equation}\label{equation: basic prop psi}
\psi(\p)\cdot \overline{\psi(\p)}=\Nm(\p),\qquad \psi(\p)\cO_F=\prod_{\sigma\in \Phi}\sigma^{-1}(\Nm_{K/F}(\p))\cO_F.
\end{equation}
We claim that it suffices to prove the above equalities after raising them to the $n$-th power for a given natural number $n$.
Indeed, as for the left-hand side equality, the claim follows from the fact that, after taking any embedding $F\hookrightarrow \C$, we have that $\psi(\p)\overline{ \psi(\p)}$ is a positive real number.
As for the right-hand side equality, the claim follows from unique factorisation of fractional ideals.
By finiteness of the class number, there exists a natural number $n$ and $a\in K^\times$ such that $\p^n=(a)$.
This reduces the case $s_\p$ to the already proven case $a\in K^*$.

As for the unicity statement, observe that a priori this does not  directly follow from the unicity statement in \cite[Chapter 5, Theorem 5.1]{Lan83}, as the latter requires singling out a polarisation. The proof, however, based on \cite[Chapter 5, Theorem 5.2]{Lan83}, shows that unicity holds also after forgetting the polarisation. 
\end{proof}

\section{Proof of Theorem \ref{theorem: maintheorem}: case $ii)$}\label{section: case ii}

Throughout this section we make the following assumption on the abelian threefold $A$ over $K$ with potential multiplication by $M$.
 
\begin{assumption}\label{assumption: quadratic field condition}
There exists a quadratic extension $L/K$ such that 
$$
[\iota(M)\cap \End^0(A_L):\iota(M)\cap \End^0(A)]=2.
$$
In other words, $\iota(M)\subseteq \End^0(A_L)$ and $\iota(M)\cap \End^0(A)= \Q$.
\end{assumption}
Observe that, by Proposition \ref{proposition: prel1}, if Assumption \ref{assumption: quadratic field condition} holds, then necessarily $M\subseteq L$. Let 
$$
\varrho_{\ell}:=\varrho_{A,\ell}:G_{K}\rightarrow \Aut(V_\ell(A)),\qquad \varrho_{\lambda}:=\varrho_{A_L,\lambda}:G_{L}\rightarrow \Aut(V_\lambda(A_L))
$$ 
respectively denote the $\ell$-adic and $\lambda$-adic representations attached to $A$ and $A_L$.
Let $s \in G_K$ be an element which does not map to the identity in $\Gal(L/K)$ and write ${}^s\varrho_{\lambda}$ to denote the representation defined by $\acc{s}\varrho_{\lambda}(t)\coloneqq\varrho_{\lambda}(s t s^{-1})$ for $t\in G_{L}$.
Let $\acc{s}V_{\lambda}(A_L)$ be the $M_\lambda[G_L]$-module affording the representation $\acc s \varrho_\lambda$.
Define similarly $\acc{s}V_{\ell}(A)$ and observe that $V_\ell(A)\simeq \acc{s}V_{\ell}(A)$ as $\Q_\ell[G_K]$-modules.
From Assumption \ref{assumption: quadratic field condition} one has an isomorphism $\Gal(L/K)\simeq \Gal(M/\Q)$, under which we will let $s$ act on $M$ via its nontrivial automorphism. Hence we may write $\acc s\lambda=\overline \lambda$. 

To shorten notation, we write $W_\lambda$ (resp. $\overline W_\lambda$) in place of $\wedge^3 V_\lambda(A_{L})$ (resp. $\wedge^3 \overline V_\lambda(A_{L})$).
If the pair $(A,\iota)$ has signature $(2,1)$, let $\psi$ denote the algebraic Hecke character of $L$ associated to $\{W_\lambda(-1)\}_\lambda$ as in the previous section.
Similarly, write $\acc s\psi$ to denote the algebraic Hecke character associated to $\{\acc s W_\lambda(-1)\}_\lambda$.
It satisfies $\acc s\psi(\af)=\psi(\acc s\af)$, where $\af$ is a fractional ideal of $L$ coprime to the conductor of $\psi$.
By the results of the previous section, there is an elliptic curve $E_0$ defined over $L$ attached to $\psi$.
Lemmas \ref{lemma: complex comm} and \ref{lemma: psi on inert} below exhibit certain properties of $\psi$ that will enable us to obtain a model of $E_0$ over $K$.

In the course of the proof of \cite[Theorem 2.9]{FG22} it is essentially shown that $\acc sV_{\lambda}(A_L)\simeq \overline V_{ \lambda}(A_L)$.
Since some formal aspects are distinct in our context, we detail the proof of this isomorphism here for the reader's convenience.

\begin{lemma}\label{lemma: complex comm}
Let $A$ be an abelian threefold over $K$ satisfying Assumption \ref{assumption: quadratic field condition}. Then there is an isomorphism $\acc{s}V_{\lambda}(A_L)\simeq \overline V_{\lambda}(A_L)$ of $M_\lambda[G_L]$-modules. In particular, if the pair $(A,\iota)$ has signature $(2,1)$, then $\acc s\psi=\overline \psi$.
\end{lemma}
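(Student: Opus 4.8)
The plan is to establish the isomorphism $\acc{s}V_{\lambda}(A_L)\simeq \overline V_{\lambda}(A_L)$ by comparing traces of Frobenius elements and invoking Chebotarev, exactly as was done in Section~\ref{section: case i} to produce the isomorphism \eqref{equation: lambda plus lambdabar}. The conjugation action of $s\in G_K$ on $\iota(M)$ is the crucial input: by Assumption~\ref{assumption: quadratic field condition} the element $s$ acts on $\iota(M)\subseteq\End^0(A_L)$ as the nontrivial automorphism $\overline{\cdot}$ of $M$, since $\iota(M)\cap\End^0(A)=\QQ$ forces the Galois action of $\Gal(L/K)$ on $\iota(M)$ to be nontrivial. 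This is precisely what lets me identify $\acc{s}\lambda$ with $\overline\lambda$.

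\textbf{First} I would compute the effect of conjugation by $s$ on the $M$-module structure of the Tate module. For a prime $\p$ of $L$ of good reduction, coprime to $\acc s \p$, the arithmetic Frobenius $\Frob_\p$ in $G_L$ gets sent by conjugation to (a Frobenius at) the prime $\acc s\p$. Writing $a_\p\in M$ for the Frobenius trace on $V_\lambda(A_L)$ given by \cite[Theorem 2.1.2]{Rib76}, I would argue that the trace of $\Frob_\p$ on $\acc sV_\lambda(A_L)$ equals $\sigma_\lambda(a_{\acc s\p})$, while the relation $\acc s\iota(\alpha)=\iota(\overline\alpha)$ implies that on the $M_\lambda$-vector space the conjugation twists the scalar action by $\overline{\cdot}$. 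Comparing this with the trace formula \eqref{equation: conjugate traces} for $\overline V_\lambda(A_L)$, namely $\sigma_\lambda(\overline{a_\p})$, and using that $a_{\acc s\p}$ and $\overline{a_\p}$ agree because $V_\ell(A)\simeq\acc sV_\ell(A)$ as $\QQ_\ell[G_K]$-modules, I obtain the equality of traces. Since both modules are semisimple (the Tate module of an abelian variety is, by Faltings), Chebotarev and the Brauer--Nesbitt theorem then upgrade the trace identity to the claimed isomorphism of $M_\lambda[G_L]$-modules.

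\textbf{For the second assertion}, once $\acc sV_\lambda(A_L)\simeq\overline V_\lambda(A_L)$ is known, taking third exterior powers gives $\acc sW_\lambda\simeq\overline W_\lambda$ (exterior powers commute with the twist by $s$ and with the conjugation $\overline{\cdot}$), hence $\acc sW_\lambda(-1)\simeq\overline W_\lambda(-1)$. By the uniqueness of the algebraic Hecke character attached to a compatible system of Hodge--Tate characters, the Hecke character of $\{\acc sW_\lambda(-1)\}_\lambda$ is $\acc s\psi$ and that of $\{\overline W_\lambda(-1)\}_\lambda$ is $\overline\psi$ (the latter by \eqref{equation: conjugate traces} applied to $A_L$). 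Therefore $\acc s\psi=\overline\psi$.

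\textbf{The main obstacle} I anticipate is bookkeeping the interaction between the two independent conjugations in play: the \emph{Galois} twist $\acc s(-)$ coming from $s\in G_K\setminus G_L$, and the \emph{coefficient} conjugation $\overline{(-)}$ on $M_\lambda$ implemented by $\varphi_\lambda$. The identification $\acc s\lambda=\overline\lambda$ is what reconciles them, but I would need to be careful that the $M_\lambda$-linear structures are compared via the correct isomorphism $\varphi_\lambda$, so that the trace computation is genuinely an identity of elements of $M_\lambda$ rather than merely of their $\QQ_\ell$-traces. Getting this compatibility precise — rather than the density or semisimplicity steps, which are standard — is where the care is required, and it is exactly the point the authors flag as differing formally from \cite[Theorem 2.9]{FG22}.
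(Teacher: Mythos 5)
Your strategy is genuinely different from the paper's. The paper proves the first isomorphism by pure transport of structure: it introduces the conjugate variety $\acc sA_L$ with its embedding of $s(M)$, observes that $V_\lambda(A_L)\simeq V_{\acc s\lambda}(\acc sA_L)$ because $A$ is defined over $K$, and that $P\mapsto \acc sP$ gives $\acc{s^{-1}}V_{\acc s\lambda}(A_L)\simeq V_{\acc s\lambda}(\acc sA_L)$; combining these with $\acc s\lambda=\overline\lambda$ yields an explicit $M_\lambda[G_L]$-isomorphism with no appeal to Chebotarev, Brauer--Nesbitt, or semisimplicity of the Tate module. Your route instead reduces everything to an identity of Frobenius traces and then needs Faltings to upgrade it; this is heavier machinery for the same statement, though it does work and has the merit of making the identity $a_{\acc s\PP}=\overline{a_\PP}$ explicit. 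For the second assertion the two arguments coincide (take determinants/third exterior powers and invoke uniqueness of the attached Hecke character); the paper also records an alternative proof of $\acc s\psi=\overline\psi$ via the identification of pairs $(A_L,\acc s\iota)\simeq(\acc sA_L,\acc s\iota)$.

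One step of your argument is not justified as written. You assert $a_{\acc s\PP}=\overline{a_\PP}$ ``because $V_\ell(A)\simeq\acc sV_\ell(A)$ as $\Q_\ell[G_K]$-modules''. That isomorphism only controls $\Q_\ell$-valued traces, hence only gives $\Tr_{M/\Q}(a_{\acc s\PP})=\Tr_{M/\Q}(a_\PP)$, which does not distinguish $\overline{a_\PP}$ from $a_\PP$ (or from any other element of $M$ with the same trace) --- and ruling out $a_{\acc s\PP}=a_\PP$ is precisely the content of the lemma. The missing input is the one you name but never actually wire into the deduction: the automorphism $\varrho_\ell(s)$ of $V_\ell(A_L)$ satisfies $\varrho_\ell(s)\circ\iota(\alpha)=\iota(\overline\alpha)\circ\varrho_\ell(s)$, so it is $\overline\cdot$-semilinear over $\iota(M)\otimes\Q_\ell$ and therefore carries the component $V_\lambda(A_L)$ onto $V_{\overline\lambda}(A_L)$, matching coefficient structures via $\varphi_\lambda$. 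Since $\varrho_\ell(s)$ also conjugates $\varrho_\ell(\Frob_\PP)$ into $\varrho_\ell(\Frob_{\acc s\PP})$, taking $M_\lambda$-traces on these components gives $\sigma_\lambda(a_{\acc s\PP})=\varphi_\lambda(\sigma_{\overline\lambda}(a_\PP))=\sigma_\lambda(\overline{a_\PP})$. With this inserted your proof closes; note, however, that at this point you have in effect reconstructed the paper's transport-of-structure isomorphism, which is why the trace/Chebotarev superstructure is dispensable.
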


\begin{proof}
Associated to the abelian variety $\acc s A_L$ and the unital algebra embedding
$$
s(M) \rightarrow \End^0 (\acc s A_L) \rightarrow  \End_{G_L}(V_\ell(\acc s A_L)),
$$
we have the $M_{\acc s \lambda}[G_L]$-module
$$
V_{\acc s \lambda}(\acc s A_L):= V_\ell(\acc s A_L)\otimes_{s(M)\otimes \Q_\ell, \sigma_{\acc s\lambda}}M_{\acc s \lambda}.
$$
Since $A$ is defined over $K$, by transport of structure, there is an isomorphism $V_{\lambda}(A_L)\simeq V_{\acc s \lambda}(\acc s A_L)$.
The map $A_L(\bar{K}) \rightarrow \acc s A_L(\bar{K})$ given by $P \mapsto \acc s P$ induces an isomorphism of $M_\lambda[G_L]$-modules $\acc {s^{-1}} V_{\acc s \lambda}(A_L) \simeq V_{\acc s \lambda}(\acc s A_L)$. The previous isomorphisms, together with the fact that $\overline V_\lambda(A_L)$ is  $V_{\acc s \lambda}(A_L)$ regarded as an $M_\lambda[G_L]$-module, imply the first part of the lemma.
Taking determinants, we obtain
$$
\acc{s} W_\lambda \simeq \overline W_{ \lambda},
$$
and the second part of the lemma follows.
We give an alternative more direct proof of the latter part of the lemma.
Indeed, on the one hand, we have that
$$
(A_L,\acc s \iota)\simeq (\acc s A_L,\acc s \iota),
$$
since $A_L$ is in fact defined over $K$. On the other hand, observe that the algebraic Hecke characters attached to the Hodge--Tate modules $\{W_\lambda(A_L,\acc s \iota)(-1)\}_\lambda$ and $\{W_\lambda(\acc s A_L,\acc s \iota)(-1)\}_\lambda$ are $\overline\psi$ and $\acc s\psi$, respectively. We conclude that $\overline\psi$ and $\acc s\psi$ must coincide.
\end{proof}

Recall that we denote the induction of a representation from $G_L$ to $G_K$ by $\Ind^K_L$.

\begin{lemma}\label{lemma: induction module} Let $A$ be an abelian threefold over $K$ satisfying Assumption \ref{assumption: quadratic field condition}.
Then, for every prime $\lambda$ of $M$ over a rational prime $\ell$, there is an isomorphism
$$
V_{\ell}(A)\otimes_{\Q_\ell} M_\lambda\simeq \Ind^{K}_L V_\lambda(A_{L})
$$
of $M_\lambda[G_K]$-modules.
In particular, if $\mathfrak p$ is a prime ideal of $K$ of good reduction for $A$ and inert in $L/K$.
Then there exists $t_\p\in \Z$ with $|t_\p|\leq 2\mathrm{Nm}(\p)$ such that 
$$
L_\p(A,T)=(1+\Nm(\p)T^2)(1-t_\p T^2 + \Nm(\p)^2 T^4),
$$
where $L_\p(A,T):=\det(1-\varrho_\ell(\Frob_\p)T)$.
\end{lemma}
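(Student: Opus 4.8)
The plan is to establish part i) first and then read off part ii). For part i) I would compare the two $M_\lambda[G_K]$-modules $V_\ell(A)\otimes_{\Q_\ell}M_\lambda$ and $\Ind^K_LV_\lambda(A_L)$ by the Brauer--Nesbitt theorem, i.e.\ by matching the traces of $\Frob_\p$ for all primes $\p$ of $K$ of good reduction that are unramified in $L$; such primes are dense by Chebotarev. Both modules are semisimple: $V_\ell(A)$ is semisimple by Faltings and scalar extension preserves this, while $V_\lambda(A_L)$ is a summand of the semisimple module $V_\ell(A_L)\otimes_{\Q_\ell}M_\lambda$ of \eqref{equation: lambda plus lambdabar} and induction preserves semisimplicity in characteristic zero.

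For the trace comparison I would treat split and inert primes separately. If $\p$ splits in $L$, then $\Frob_\p$ lies in $G_L$ and the induced character equals $\Tr(\varrho_\lambda(\Frob_\p))+\Tr(\varrho_\lambda(s\Frob_\p s^{-1}))$; by Lemma~\ref{lemma: complex comm} the second term is the trace of $\Frob_\p$ on $\overline V_\lambda(A_L)$, so by \eqref{equation: lambda plus lambdabar} the sum is $\Tr(\varrho_\ell(\Frob_\p))$. If $\p$ is inert, the induced character vanishes, and I would check that $\Tr(\varrho_\ell(\Frob_\p))=0$ too: by Assumption~\ref{assumption: quadratic field condition} the element $s=\Frob_\p$ acts on $\End^0(A_L)$ through the nontrivial element of $\Gal(L/K)$ and hence conjugates $\iota(M)$ by $\overline{\,\cdot\,}$, so it interchanges the two $\iota$-eigenspaces $V_\lambda(A_L)$ and $\overline V_\lambda(A_L)$ of $V_\ell(A)\otimes M_\lambda$; its matrix is then block-antidiagonal and its trace is $0$. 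This gives part i).

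For part ii), fix an inert $\p$, set $q=\Nm(\p)$, and let $\PP$ be the prime of $L$ above $\p$, so one may choose $\Frob_\p\notin G_L$ with $\Frob_\p^2=\Frob_{\PP}$. Using part i) together with the coset decomposition $\Ind^K_LV_\lambda(A_L)=V_\lambda(A_L)\oplus\Frob_\p\cdot V_\lambda(A_L)$ and computing the characteristic polynomial of $\Frob_\p$ in block form gives
\[
L_\p(A,T)=\det\bigl(1-\Frob_\p T\mid \Ind^K_LV_\lambda(A_L)\bigr)=\prod_{i=1}^{3}\bigl(1-\mu_iT^2\bigr),
\]
where $\mu_1,\mu_2,\mu_3$ are the eigenvalues of $\Frob_{\PP}$ on $V_\lambda(A_L)$, each of absolute value $q$ by the Weil bounds. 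Since $L_\p(A,T)\in\Z[T]$, the polynomial $g(U)=\prod_i(1-\mu_iU)$ lies in $\Z[U]$; hence the $\mu_i$ are stable under complex conjugation and at least one of them is real, so equal to $\pm q$.

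The delicate point is the sign of this real eigenvalue, which I would pin down with a determinant computation. By the block form the six eigenvalues of $\Frob_\p$ on $V_\ell(A)$ are $\pm\sqrt{\mu_i}$, whose product is $-\mu_1\mu_2\mu_3$; on the other hand the Weil pairing identifies $\wedge^6 V_\ell(A)$ with $\Q_\ell(3)$, so $\det(\varrho_\ell(\Frob_\p))=\chi_\ell(\Frob_\p)^3=q^3$. Thus $\mu_1\mu_2\mu_3=-q^3$, and its negativity forces an odd number of the real $\mu_i$ to equal $-q$; in particular $-q\in\{\mu_i\}$, so $(1+qT^2)$ divides $L_\p(A,T)$. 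The complementary factor is $1-t_\p T^2+q^2T^4$ with $t_\p=q+\mu_1+\mu_2+\mu_3$, which lies in $\Z$ (since $\mu_1+\mu_2+\mu_3$ is a coefficient of $g$) and, being the sum of the two remaining eigenvalues of absolute value $q$, satisfies $|t_\p|\le 2q$. I expect the main obstacle to be the trace bookkeeping in part i)—especially the vanishing at inert primes and the correct appeal to Lemma~\ref{lemma: complex comm} at split primes—while the a priori ambiguous sign in part ii) is resolved cleanly by the determinant of Frobenius.
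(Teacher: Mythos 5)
Your argument is correct, but both halves take a genuinely different route from the paper's. For part i), the paper applies Milne's theorem \cite{Mil72} under Assumption \ref{assumption: quadratic field condition} to get an isogeny $A^2\sim\Res_{L/K}A_L$, rewrites it as $\big(V_\ell(A)\otimes_{\Q_\ell}M_\lambda\big)^{\oplus 2}\simeq \Ind^K_L\big(V_\ell(A_L)\otimes_{\Q_\ell}M_\lambda\big)$, splits the right-hand side via \eqref{equation: lambda plus lambdabar} and Lemma \ref{lemma: complex comm}, and cancels the square; you instead run a Brauer--Nesbitt comparison of characters. That costs you Faltings' semisimplicity (plus semisimplicity of the induced module) but avoids the geometric input from \cite{Mil72}. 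Your vanishing of $\Tr\varrho_\ell(\Frob_\p)$ at inert primes --- Frobenius interchanges the two $\iota(M)$-eigenspaces because it conjugates $\iota(M)$ by $\overline{\,\cdot\,}$ --- is precisely the content the paper extracts from Assumption \ref{assumption: quadratic field condition} when it declares ${}^s\lambda=\overline\lambda$, so it is a legitimate input, though you should say explicitly that $G_K$ stabilises $\iota(M)$ and acts on it through $\Gal(L/K)\simeq\Gal(M/\Q)$. For part ii), the paper's factorisation is essentially formal: the induction structure forces $L_\p(A,T)$ to be a polynomial in $T^2$, the functional equation then forces the shape $1+b_\p T^2+b_\p\Nm(\p)T^4+\Nm(\p)^3T^6$, which factors as $(1+\Nm(\p)T^2)(1+(b_\p-\Nm(\p))T^2+\Nm(\p)^2T^4)$ identically, and the bound $|t_\p|\le 2\Nm(\p)$ is imported from \cite{KS08}. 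Your route --- Weil bounds $|\mu_i|=\Nm(\p)$ on the eigenvalues of $\Frob_\PP$ together with $\det\varrho_\ell(\Frob_\p)=\Nm(\p)^3$ to force $-\Nm(\p)$ to occur among the $\mu_i$ --- is slightly longer but self-contained, needing neither the functional-equation bookkeeping nor the citation for the coefficient bound. Both arguments are sound and the computations ($\mu_1\mu_2\mu_3=-\Nm(\p)^3$, $t_\p=\Nm(\p)+\sum_i\mu_i$, $|t_\p|\le 2\Nm(\p)$) check out.
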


\begin{proof}
Under Assumption \ref{assumption: quadratic field condition}, we can apply \cite[Theorem 3]{Mil72}. This yields an isogeny
$$
 A^2\sim\Res_{L/K}A_L ,
$$
where $\Res_{L/K}A_L$ denotes Weil's restriction of scalars. By \cite[Page 178, Paragraph (a)]{Mil72}, in terms of $M_\lambda[G_K]$-modules, this is recast as
$$
\big(V_\ell(A)\otimes_{\Q_\ell}M_\lambda\big) ^{\oplus 2} \simeq \Ind^K_L\left(V_\ell(A_L)\otimes_{\Q_\ell} M_\lambda\right).
$$
By successively using \eqref{equation: lambda plus lambdabar} and Lemma \ref{lemma: complex comm}, we obtain
$$
V_\ell(A_L)\otimes_{\Q_\ell} M_\lambda\simeq V_\lambda(A_L)\oplus \overline V_{\lambda}(A_L) \simeq V_\lambda(A_L)\oplus \acc s V_\lambda(A_L).
$$
Putting together the two previous isomorphisms yields
$$
\big(V_\ell(A)\otimes_{\Q_\ell}M_\lambda\big) ^{\oplus 2}  \simeq \Ind^K_LV_\lambda(A_L)\oplus \Ind^K_L\acc s V_\lambda(A_L)\simeq \big(\Ind^K_L V_\lambda(A_L)\big)^{\oplus 2},
$$
which establishes the first part of the lemma.

It follows\footnote{Here, we use the following fact.
Let $G$ be a group, $H$ a normal subgroup of finite index of $G$, and $\varrho$ a finite dimensional representation of $H$.
Then, for $s \in G$, the characteristic polynomial of $\varrho(s)$ is a polynomial in $T^f$, where $f$ is the order of $s$ in $G/H$.
This is shown, for example, in the course of the proof of \cite[Proposition 10.4 (iv)]{Neu92}.} from the above isomorphism that, if $\p$ is inert in $L/K$, then
$L_\p(A,T)$ is a polynomial in $T^2$, that is, there exists $b_\p\in \Z$ such that
$$
L_\p(A,T)=1+b_\p T^2 + b_\p \Nm(\p) T^4 + \Nm(\p)^3 T^6.
$$
Such a polynomial decomposes as 
$$
L_\p(A,T)=(1+\Nm(\p)T^2)(1+(b_\p-\Nm(\p)) T^2 + \Nm(\p)^2 T^4).
$$
It thus suffices to show that $-\Nm(\p)\leq b_\p \leq 3\Nm(\p)$. But this is precisely the bound for $b_\p$ given by \cite[Proposition 4]{KS08}. 
\end{proof}

\begin{lemma}\label{lemma: psi on inert}
Let $A$ be an abelian threefold over $K$ satisfying Assumption \ref{assumption: quadratic field condition} such that the pair $(A,\iota)$ has signature $(2,1)$.
Let $\mathfrak p$ be a prime ideal of $K$ of good reduction for $A$ inert in $L/K$.
Letting $\PP$ denote the prime of $L$ above $\p$, we have $\psi(\PP)=-\Nm(\p)$.
\end{lemma}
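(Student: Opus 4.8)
The plan is to read $\psi(\PP)$ off from a single Frobenius determinant. By construction of the compatible system attached to a Hecke character, $\sigma_\lambda(\psi(\PP))$ is the scalar by which $\Frob_\PP$ acts on the one-dimensional $M_\lambda$-space $W_\lambda(-1)=\wedge^3 V_\lambda(A_L)(-1)$, so that
\[
\sigma_\lambda(\psi(\PP))=\det\varrho_\lambda(\Frob_\PP)\cdot\Nm(\PP)^{-1}.
\]
Since $\p$ is inert in $L/K$ we have $\PP=\p\cO_L$ and $\Nm(\PP)=\Nm(\p)^2$, so the task reduces to computing $\det\varrho_\lambda(\Frob_\PP)$, that is, the product $\beta_1\beta_2\beta_3$ of the three eigenvalues of $\Frob_\PP$ acting on $V_\lambda(A_L)$.

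First I would feed in the isomorphism $V_\ell(A)\otimes_{\Q_\ell}M_\lambda\simeq \Ind^K_L V_\lambda(A_L)$ of Lemma~\ref{lemma: induction module} and analyse $\Frob_\p$ at the inert prime. Choosing $\{1,s\}$ as coset representatives for $G_L$ in $G_K$ with $s$ a Frobenius at $\p$, the element $\Frob_\p$ acts on $V_\lambda(A_L)\oplus sV_\lambda(A_L)$ through the block matrix $\left(\begin{smallmatrix}0&\varrho_\lambda(\Frob_\PP)\\ \mathrm{id}&0\end{smallmatrix}\right)$, because $\Frob_\p^{2}\in G_L$ restricts to $\Frob_\PP$, the residue degree of $\PP/\p$ being $2$. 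Consequently the six eigenvalues of $\Frob_\p$ on $\Ind^K_L V_\lambda(A_L)$ are exactly the square roots $\pm\sqrt{\beta_j}$ for $j=1,2,3$; this is the quantitative sharpening of the footnote fact used in Lemma~\ref{lemma: induction module} that $L_\p(A,T)$ is a polynomial in $T^2$.

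Multiplying these six eigenvalues gives $\prod_{j=1}^3\sqrt{\beta_j}\,(-\sqrt{\beta_j})=(-1)^3\beta_1\beta_2\beta_3=-\det\varrho_\lambda(\Frob_\PP)$. On the other hand this same product is $\det\varrho_\ell(\Frob_\p)=\Nm(\p)^3$, a value I would justify either by the Weil pairing or, as a self-contained cross-check, as the leading coefficient of the factorisation of $L_\p(A,T)$ recorded in Lemma~\ref{lemma: induction module}. Equating the two expressions yields $\det\varrho_\lambda(\Frob_\PP)=-\Nm(\p)^3$, whence
\[
\sigma_\lambda(\psi(\PP))=\det\varrho_\lambda(\Frob_\PP)\cdot\Nm(\PP)^{-1}=-\Nm(\p)^3\cdot\Nm(\p)^{-2}=-\Nm(\p).
\]
As $-\Nm(\p)\in\Q$ is fixed by $\sigma_\lambda$, this gives $\psi(\PP)=-\Nm(\p)$, as desired.

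The step I expect to be the real content is fixing the \emph{sign}. The basic relation \eqref{equation: basic prop psi} only gives $\psi(\PP)\overline{\psi(\PP)}=\Nm(\PP)=\Nm(\p)^2$, so a priori $\psi(\PP)=\pm\Nm(\p)$; the minus sign is forced by the odd rank $3$ coming from the signature $(2,1)$, through the factor $(-1)^3$ above. I would double-check this against the Fr\"ohlich determinant formula $\det\Ind^K_L\varrho_\lambda=\epsilon_{L/K}^{\,3}\cdot(\det\varrho_\lambda)\circ\mathrm{Ver}$, using that $\epsilon_{L/K}(\Frob_\p)=-1$ at an inert prime and that the transfer sends $\Frob_\p$ to $\Frob_\PP$; this reproduces $\det\varrho_\lambda(\Frob_\PP)=-\Nm(\p)^3$ independently of the eigenvalue computation.
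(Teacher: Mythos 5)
Your proof is correct, but it takes a genuinely different route from the paper's. The paper first uses $\psi(\PP)\overline{\psi(\PP)}=\Nm(\PP)$ together with Lemma~\ref{lemma: complex comm} to reduce to the sign ambiguity $\psi(\PP)=\pm\Nm(\p)$, and then resolves the sign by a case analysis on the eigenvalues of $\varrho_\lambda(\Frob_\PP)$: it invokes the explicit factorisation $L_\p(A,T)=(1+\Nm(\p)T^2)(1-t_\p T^2+\Nm(\p)^2T^4)$ from the second part of Lemma~\ref{lemma: induction module} (which rests on the Kedlaya--Sutherland bound for $b_\p$), uses \eqref{equation: lambda plus lambdabar} to narrow the eigenvalues to the two configurations $-\Nm(\p),\alpha,\alpha$ or $-\Nm(\p),\alpha,\overline\alpha$, and finally shows $a_\PP\in\Q$ (again via Lemma~\ref{lemma: complex comm}) to select the second configuration before taking the determinant. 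You instead compute $\det\varrho_\lambda(\Frob_\PP)$ in one stroke from the determinant of the induced representation at an inert prime, $\det\Ind^K_L\varrho_\lambda(\Frob_\p)=(-1)^3\det\varrho_\lambda(\Frob_\PP)$, combined with $\det\varrho_\ell(\Frob_\p)=\Nm(\p)^3$ from the Weil pairing; this uses only the first (induction) isomorphism of Lemma~\ref{lemma: induction module} and bypasses Lemma~\ref{lemma: complex comm}, the $L$-polynomial factorisation, and the trace-rationality argument entirely. Your block-matrix computation and the Fr\"ohlich cross-check are both accurate, and your argument has the virtue of making transparent that the minus sign is exactly $(-1)^{\dim}=(-1)^3$, i.e.\ an artefact of the odd rank coming from $\dim A=3$; the paper's longer route, on the other hand, produces side information (the shape of $L_\p(A,T)$ at inert primes and the rationality of $a_\PP$) that is recorded and potentially useful elsewhere. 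Both arguments ultimately rest on the same input, Milne's isogeny $A^2\sim\Res_{L/K}A_L$ under Assumption~\ref{assumption: quadratic field condition}.
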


\begin{proof}
Equation \eqref{equation: basic prop psi} and Lemma \ref{lemma: complex comm} imply
$$
\psi(\PP)^2=\psi(\PP)\psi(\acc s\PP)=\psi(\PP)\overline\psi(\PP)=\Nm(\PP)=\Nm(\p)^2.
$$
Hence we have $\psi(\PP)=\pm \Nm(\p)$, and the question is to establish the minus sign.
The roots of $L_\PP(A_{L},T)$ are the squares of the roots of $L_\p(A,T)$.
Lemma~\ref{lemma: induction module} then implies that the reciprocal roots of $L_\PP(A_{L},T)$ are of the form $-\Nm(\p)$, $-\Nm(\p)$, $\alpha$, $\alpha$, $\overline \alpha$, $\overline \alpha$ for some $\alpha \in \Qbar$ such that $\alpha\overline \alpha=\Nm(\p)^2=\Nm(\PP)$.
From \eqref{equation: lambda plus lambdabar}, we deduce the eigenvalues of $\varrho_\lambda(\Frob_\PP)$ are either of the form
\begin{equation}\label{equation: two possible eigenvalues}
-\Nm(\p),\alpha,\alpha   \qquad \text{or} \qquad -\Nm(\p),\alpha,\overline\alpha.
\end{equation} 
Recall that there is $a_\PP\in M$ such that $\Tr(\varrho_\lambda(\Frob_\PP))=\sigma_\lambda(a_\PP)$. We claim that in fact $a_\PP$ belongs to $\Q$.
Indeed, since $\acc s\PP=\PP$, we have
$$
\sigma_\lambda(a_\PP)=\Tr(\varrho_\lambda(\Frob_{\acc s\PP}))=\Tr(\varrho_\lambda(s\Frob_{\PP}s^{-1}))=\Tr(\acc s\varrho_{\lambda}(\Frob_\PP))=\Tr(\Frob_\PP|\overline V_\lambda(A))=\sigma_\lambda(\overline a_\PP),
$$
where we have used Lemma \ref{lemma: complex comm} in the penultimate equality and \eqref{equation: conjugate traces} in the last equality.
Hence the eigenvalues of $\varrho_\lambda(\Frob_\PP)$ are given by the latter of the two options of \eqref{equation: two possible eigenvalues}, and thus we have
$$
\psi(\PP)=\frac{\det(\varrho_\lambda(\Frob_\PP))}{\Nm(\PP)}=-\frac{\Nm(\p)\alpha \overline \alpha}{\Nm(\PP)}=-\Nm(\p),
$$
as desired.
\end{proof}

\begin{theorem}\label{theorem: M not in k}
Let $A$ be an abelian threefold defined over $K$ with potential imaginary multiplication by $M$ via the unital algebra embedding $\iota \colon M\hookrightarrow \End^0(A_\bK)$.
Let $L$ denote $KM$ and suppose that 
$$
[\iota(M)\cap \End^0(A_L)\colon \iota(M)\cap \End^0(A)]=2.
$$ 
Suppose that the pair $(A,\iota)$ has signature $(2,1)$ and that there exists an embedding $\sigma\colon L \hookrightarrow \C$ such that either $\sigma(K)\subseteq \R$ or $j(\cO_M)\in \sigma(K)$. 

Then there exists an elliptic curve $E$ defined over $K$ such that $E_L$ has complex multiplication by $M$ and  for every prime $\lambda$ of $M$ over a rational prime $\ell$, there is an isomorphism
$$
\Ind^K_L\big(\wedge ^3V_\lambda(A_L) \big)(-1)\simeq V_\ell(E)\otimes_{\Q_\ell}M_\lambda
$$
of $M_\lambda[G_K]$-modules.
In particular, $M$ has class number $h_M\leq [K\colon \Q]$.
\end{theorem}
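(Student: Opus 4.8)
The plan is to descend the elliptic curve produced in case $i)$ from $L$ down to $K$, using as input the two properties of the associated Hecke character recorded in Lemmas~\ref{lemma: complex comm} and \ref{lemma: psi on inert}. First I would apply Theorem~\ref{theorem: M in k} to $(A_L,\iota)$ over $L$: since $M\subseteq L$ by the remark following Assumption~\ref{assumption: quadratic field condition} and $(A_L,\iota)$ still has signature $(2,1)$, this yields an elliptic curve $E_0$ over $L$ with complex multiplication by $M$ whose compatible system $\{V_\lambda(E_0)\}_\lambda$ realises $\psi$; matching the $\lambda$-parts of the isomorphism of Theorem~\ref{theorem: M in k} gives $(\wedge^3V_\lambda(A_L))(-1)\simeq V_\lambda(E_0)$. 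The theorem then reduces to showing that the two-dimensional $M_\lambda[G_K]$-module $\Ind^K_L V_\lambda(E_0)$ is of the form $V_\ell(E)\otimes_{\Q_\ell}M_\lambda$ for a single elliptic curve $E$ over $K$.

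Second, I would verify that this induced module is invariant under the semilinear $M_\lambda$-conjugation, so that it descends to a two-dimensional $\Q_\ell[G_K]$-module. Indeed, Lemma~\ref{lemma: complex comm} gives $\acc s\psi=\overline\psi$, hence $\acc s V_\lambda(E_0)\simeq\overline V_\lambda(E_0)$, and combining this with the $s$-invariance of induction one finds $\overline{\Ind^K_L V_\lambda(E_0)}\simeq\Ind^K_L\overline V_\lambda(E_0)\simeq\Ind^K_L\acc s V_\lambda(E_0)\simeq\Ind^K_L V_\lambda(E_0)$. The descended modules form a compatible system of two-dimensional $\Q_\ell$-representations of $G_K$ with cyclotomic determinant and Hodge--Tate weights $\{0,1\}$ (read off from the infinity type $\sigma\circ\Nm_{L/M}$ of $\psi$), i.e. the candidate Tate module of an elliptic curve over $K$.

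Third, and this is where I expect the real work to lie, I would produce an honest elliptic curve $E$ over $K$ realising this system. A priori the descended compatible system could be the Tate module either of an elliptic curve over $K$ acquiring its complex multiplication over $L$ or of a false elliptic curve over $K$ (an abelian surface with quaternionic multiplication). The uniqueness clause of Theorem~\ref{theorem: Casselman}, applied over $L$, shows $\acc s E_0\cong E_0$, since both $(\acc s E_0,\acc s\theta)$ and $(E_0,\overline\theta)$ carry the Hecke character $\overline\psi$; thus $E_0$ is endowed with a descent datum for $L/K$ whose only obstruction to effectivity is a sign in $\{\pm1\}$, that is, a class in $\Br(L/K)$. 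Here Lemma~\ref{lemma: psi on inert} is decisive: for $\p$ inert in $L/K$ it gives $\psi(\PP)=-\Nm(\p)$, so by Lemma~\ref{lemma: induction module} the Frobenius eigenvalues at $\p$ on the descended module are $\pm\sqrt{-\Nm(\p)}$, with product $\Nm(\p)$. This is exactly the Frobenius behaviour at a supersingular reduction of an elliptic curve over $K$; the sign $-\Nm(\p)$ excludes the quaternionic alternative and forces the relevant Brauer class to be trivial, so the descent is effective and yields $E$ over $K$ with $E_L$ isogenous to $E_0$ and $V_\ell(E)\otimes_{\Q_\ell}M_\lambda\simeq\Ind^K_L V_\lambda(E_0)$.

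Finally, the class number bound is immediate. By \cite[Theorem 1.1]{Rem17} there is an elliptic curve $E'$ over $K$ that is $K$-isogenous to $E$ with $\End(E'_{\bK})\simeq\cO_M$; its $j$-invariant $j=j(E')$ then lies in $K$ and satisfies $[\Q(j):\Q]=h_M$ by \cite[Chapter II, Theorem 4.3]{Sil94}. Hence $\Q(j)\subseteq K$ gives $h_M=[\Q(j):\Q]\leq[K:\Q]$. The main obstacle throughout is the effectivity of the descent in the third step, and the entire content of Lemmas~\ref{lemma: complex comm} and \ref{lemma: psi on inert} is to supply the two conditions on $\psi$ that guarantee it.
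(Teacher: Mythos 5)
Your proposal follows the same route as the paper: apply Theorem \ref{theorem: M in k} to $A_L$ to obtain $E_0/L$ realising $\psi$, use Lemmas \ref{lemma: complex comm} and \ref{lemma: psi on inert} to descend $E_0$ to $K$, identify $\Ind^K_L(\wedge^3 V_\lambda(A_L))(-1)$ with $V_\ell(E)\otimes_{\Q_\ell}M_\lambda$ by comparing traces on $G_L$ and on $G_K\setminus G_L$, and extract the class number bound from the $j$-invariant. Your use of the uniqueness clause of Theorem \ref{theorem: Casselman} to get $\acc s E_0\simeq E_0$ is also exactly how the paper verifies that the field of moduli of $E_0$ lies in $K$.

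The one place where your argument is a sketch rather than a proof is the step you yourself flag as ``where the real work lies'': the assertion that the single sign $\psi(\PP)=-\Nm(\p)$ at an inert prime ``forces the relevant Brauer class to be trivial'' and makes the descent effective. Knowing $\acc s E_0\simeq E_0$ only puts the obstruction to an effective descent datum in $H^2(\Gal(L/K),\Aut(E_{0,\bK}))$, and it is not formal that a Frobenius computation at one inert prime kills it; equally, rational traces for $\Ind^K_L V_\lambda(E_0)$ do not by themselves rule out the quaternionic (false elliptic curve) realisation. The paper does not reprove this either: it invokes Shimura's descent criterion as stated in \cite[Chapter 5, Theorem 6.3]{Lan83}, whose hypotheses (1) $\acc s\psi=\overline\psi$, (2) the field of moduli of $E_0$ is contained in $K$, and (3'') $\psi(\PP)=-\Nm(\p)$ for some inert $\p$, are precisely the three facts you establish. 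So your outline is correct and complete once this step is replaced by (or expanded into a proof of) that citation; as written, the implication ``sign $\Rightarrow$ effectivity'' is asserted rather than proved.
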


\begin{proof}
By Theorem \ref{theorem: M in k} applied to $A_L$, there exists an elliptic curve $E_0$ defined over $L$ for which there is an isomorphism 
$$
\big(W_\lambda\oplus \overline W_{\lambda}\big)(-1)\simeq V_\ell(E_0)\otimes_{\Q_\ell}M_\lambda
$$
of $M_\lambda[G_{L}]$-modules. Moreover, we know that $E_0$ has complex multiplication by $M$, say, via the unital algebra embedding
$$
\iota_{E_0}\colon M\hookrightarrow \End^0(E_0).
$$ 
In order to simplify the notation, let us write $\xi_\lambda$ (resp. $\overline \xi_\lambda$) to denote the representation of $G_L$ afforded by $W_\lambda(-1)$ (resp. $\overline{W}_\lambda(-1)$) until the end of this proof.
We claim that to prove the theorem it suffices to show that there exists an elliptic curve $E$ defined over $K$ such that $E_{L}$ and $E_0$ are isomorphic (over $L$).
Indeed, in that case, for $t\in G_L$, we have
$$
\Tr\Ind^K_L\xi_\lambda(t)=\Tr\xi_\lambda(t)+\Tr\overline \xi_{\lambda}(t)=\Tr \varrho_{E_0,\ell}(t)=\Tr \varrho_{E,\ell}(t).
$$ 
On the other hand, for $t\in G_K\setminus G_L$, we have
$$
\Tr\Ind^K_L \xi_\lambda(t)=0=\Tr \varrho_{E,\ell}(t).
$$
This implies that $\Ind^K_L \xi_\lambda\simeq \varrho_{E,\ell}$, since a semisimple representation with coefficients in a field of characteristic 0 is determined up to isomorphism by its trace (see \cite[Lemma on I-11]{Ser89}).

We denote the base change $E_0\times _{L,\sigma} \C$ by $E_{0,\C}$. 
To show that $E_0$ admits a model over $K$, it suffices to show that conditions (1), (2), and (3'') of \cite[Chapter 5, Theorem 6.3]{Lan83} hold. In our setting, these are:
\begin{itemize}
\item [(1)] $\acc s \psi=\overline \psi$.
\item [(2)] $\sigma(K)$ contains the field of moduli of $E_{0,\C}$.
\item [(3'')] There exists a prime $\mathfrak p$ of $K$ inert in $L/K$ such that, if $\PP$ denotes the prime of $L$ above $\p$, then $\psi(\PP)=-\Nm(\p)$.
\end{itemize}
 Conditions (1) and (3'') are satisfied by Lemmas \ref{lemma: complex comm} and \ref{lemma: psi on inert}, respectively.
By Theorem \ref{theorem: Casselman}, we may choose $E_0$ so that $E_{0,\C}\simeq \C/\cO_M$. This implies that the field of moduli of $E_{0,\C}$ is $\Q(j(\cO_M))$. Hence (2) holds if $j(\cO_M)$ lies in $\sigma(K)$. 

It suffices to show that (2) holds if $\sigma(K)\subseteq \R$. In this case, $s$ corresponds to complex conjugation on $\sigma(L)$, and hence $\sigma(K)$ is the subfield of $\sigma(L)$ fixed by complex conjugation. We have that 
$$
 \overline E_{0,\C}\simeq \C/ \overline\cO_M\simeq \C/\cO_M\simeq E_{0,\C}.
$$
Thus $\sigma(K)$ contains the field of moduli of $E_{0,\C}$. 
\end{proof}

\section{Cohomological interpretation}\label{section: cohomological}

In this section we will give an interpretation of Theorem \ref{theorem: maintheorem} in terms of étale cohomology. Let $A$ be an abelian threefold defined over $K$. For every prime $\ell$, the Weil pairing and the choice of a polarisation yield a nondegenerate alternating $G_K$-equivariant bilinear form
$$
V_\ell(A) \times V_\ell (A) \rightarrow \Q_\ell(1).
$$
This defines a nontrivial element $w$ in $\big( \big(\wedge ^2 V_\ell(A)\big) (-1)\big) ^{G_K}$. An easy calculation shows that the map 
$$
V_\ell(A)(1) \xrightarrow{- \wedge w} \wedge^3 V_\ell(A),
$$
is injective. Together with the isomorphisms \cite[Theorem 15.1]{Mil86}
$$
H^1_\et (A_\kbar,\Q_\ell)\simeq  V_\ell(A)^\vee, \qquad H^3_\et (A_\kbar,\Q_\ell)\simeq \wedge^3 V_\ell(A)^\vee, 
$$
the above injection exhibits $H^1_\et(A_\kbar, \Q_\ell)$ as a $G_K$-subrepresentation of $H^3_\et(A_\kbar, \Q_\ell(1))$. One may ask what is the largest subrepresentation of $H^3_\et(A_\kbar, \Q_\ell(1))$ which is isomorphic to $H^1_\et(B_\kbar, \Q_\ell)$ for some abelian variety $B$ defined over $K$ (as Matt Broe informed us, more general variants of this question have been studied in \cite{Kah21} and \cite{ACV20}, and the references therein). In the case that $\End(A_\kbar)$ is just $\Z$, the next proposition tells us that the largest such subrepresentation is already given by $H^1_\et(A_\kbar, \Q_\ell)$.

\begin{proposition}
Let $A$ be an abelian threefold defined over $K$ such that $\End(A_\kbar)\simeq \Z$. If $H^1_\et(B_\kbar, \Q_\ell)$ is isomorphic to a $G_K$-subrepresentation of $H^3_\et(A_\kbar, \Q_\ell(1))$ for some nontrivial abelian variety $B$ defined over $K$, then $B$ is isogenous to $A$.
\end{proposition}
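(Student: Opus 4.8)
The plan is to pin down the semisimple $G_K$-module $H^3_\et(A_\kbar,\Q_\ell(1))$ completely, identify its irreducible constituents, and then show that only one of them can occur inside the $H^1$ of an abelian variety. Write $V_\ell:=V_\ell(A)$. Since $\End(A_\kbar)\simeq\Z$, Faltings' theorems give that $V_\ell$ is a semisimple $G_K$-module with $\End_{G_K}(V_\ell)=\Q_\ell$, hence irreducible, and that every algebraic construction on $V_\ell$ is again semisimple. Moreover, because $\dim A=3$ is odd and $A$ has no nontrivial geometric endomorphisms, the Zariski closure of the image of $G_K$ in $\GL(V_\ell)$ is the full group of symplectic similitudes $\GSp_6$ attached to the Weil pairing (by the big-monodromy theorem of Serre for odd-dimensional abelian varieties with $\End(A_\kbar)\simeq\Z$). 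Consequently the $G_K$-constituents of any such construction coincide with its $\GSp_6$-constituents.

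Next I would decompose $\wedge^3 V_\ell$. Over $\mathrm{Sp}_6$ the third exterior power of the standard representation splits, with multiplicity one, as a $14$-dimensional irreducible (the primitive part, i.e.\ the kernel of contraction against the symplectic form) plus a $6$-dimensional copy of the standard representation; the latter is precisely the image of the Lefschetz map $V_\ell(1)\xrightarrow{-\wedge w}\wedge^3 V_\ell$ appearing above. Keeping track of the similitude (equivalently cyclotomic) twist, this reads, as $G_K$-modules, $\wedge^3 V_\ell\simeq V_\ell(1)\oplus P'$ with $P'$ irreducible of dimension $14$ and $V_\ell(1)\not\simeq P'$. Dualising and using $H^i_\et(A_\kbar,\Q_\ell)\simeq(\wedge^i V_\ell)^\vee$ then yields
$$
H^3_\et(A_\kbar,\Q_\ell(1))\simeq H^1_\et(A_\kbar,\Q_\ell)\oplus P,
$$
a direct sum of two non-isomorphic irreducible $G_K$-modules, each of multiplicity one, where $P=P'^\vee(1)$ has dimension $14$.

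I would then separate the two summands by their Hodge--Tate weights at a place above $\ell$. The Hodge numbers $h^{p,3-p}(H^3)=\binom{3}{p}\binom{3}{3-p}$ show that $H^3_\et(A_\kbar,\Q_\ell(1))$ has Hodge--Tate weights $-1,0,1,2$ with multiplicities $1,9,9,1$; removing the copy of $H^1_\et(A_\kbar,\Q_\ell)$ (weights $0,1$, each of multiplicity $3$) leaves $P$ with weights $-1,0,1,2$ of multiplicities $1,6,6,1$. In particular $-1$ and $2$ occur in $P$, whereas the $H^1$ of any abelian variety has all Hodge--Tate weights in $\{0,1\}$. Now if $B/K$ is nontrivial with $H^1_\et(B_\kbar,\Q_\ell)\hookrightarrow H^3_\et(A_\kbar,\Q_\ell(1))$, then, both modules being semisimple and the ambient one having exactly the two multiplicity-one constituents above, $H^1_\et(B_\kbar,\Q_\ell)\simeq H^1_\et(A_\kbar,\Q_\ell)^{a}\oplus P^{b}$ with $a,b\in\{0,1\}$. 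The Hodge--Tate constraint forces $b=0$, and $B\neq 0$ forces $a=1$, so $V_\ell(B)\simeq V_\ell(A)$ as $G_K$-modules; Faltings' isogeny theorem then gives $B\sim A$.

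The main obstacle is the input that the monodromy is all of $\GSp_6$. This is precisely what guarantees, on the one hand, that $P'$ is irreducible, so that the presence of a single Hodge--Tate weight outside $\{0,1\}$ excludes the entire $14$-dimensional summand, and, on the other hand, that $H^1_\et(A_\kbar,\Q_\ell)$ occurs with multiplicity exactly one, so that one concludes $B\sim A$ rather than $B\sim A^a$ for some $a>1$. Everything else is representation theory of $\mathrm{Sp}_6$ together with a weight count; the only point demanding care in the routine part is matching the Tate twists in the Lefschetz decomposition so that the standard-representation summand is correctly identified with $V_\ell(1)$.
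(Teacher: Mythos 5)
Your proof is correct, and its first half coincides with the paper's: both use Serre's open image theorem (the Zariski closure of the image of $G_K$ is all of $\GSp_6$ when $\End(A_{\kbar})\simeq\Z$) together with the $\GSp_6$-decomposition $\wedge^3\Std\simeq\Std\otimes\chi\oplus\Gamma$ to write $H^3_\et(A_{\kbar},\Q_\ell(1))\simeq H^1_\et(A_{\kbar},\Q_\ell)\oplus P$ with $P$ irreducible of dimension $14$, so that any $H^1_\et(B_{\kbar},\Q_\ell)$ sitting inside must be a sum of a subset of these two multiplicity-one constituents. Where you genuinely diverge is in excluding $P$: you compare Hodge--Tate weights at a place above $\ell$, noting that $P$ carries the weights $-1$ and $2$ while the $H^1$ of an abelian variety only has weights $0$ and $1$; the paper instead works at a positive-density set of primes $\p\nmid\ell$ supplied by \cite{Fit24}, where the $p$-adic valuations of the Frobenius eigenvalues $\alpha_1,\dots,\alpha_6$ satisfy $v_p(\alpha_1)=v_p(\alpha_2)=0$ and $v_p(\alpha_3)\le 1/2$, so that $\alpha_1\alpha_2\alpha_3$ is an eigenvalue on the $14$-dimensional piece of valuation $<1$, incompatible with any $V_\ell(X)(1)$. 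Your $\ell$-adic Hodge-theoretic argument is cleaner and avoids the input from \cite{Fit24}, at the cost of invoking the Hodge--Tate property of $H^3_\et(A_{\kbar},\Q_\ell)$ and the (standard, but worth a sentence) fact that a subrepresentation of a Hodge--Tate representation is Hodge--Tate with a sub-multiset of the weights; the paper's Newton-polygon argument stays entirely on the Frobenius side. Both correctly finish with Faltings to convert $V_\ell(B)\simeq V_\ell(A)$ into an isogeny.
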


\begin{proof}
Let $U_\ell$ denote the complement of $V_\ell(A)(1)$ in $\wedge^3 V_\ell(A)$. We claim that $U_\ell$ is an irreducible representation of $G_K$.

Let $\Std$ denote the representation of the algebraic group $\GSp_{6}$ over $\Q_\ell$ afforded by the $\Q_\ell$-vector space $V_\ell(A)$, and let $\chi$ denote its similitude character. By the unitary trick \cite[\S5.3]{Ser12} and \cite[Page 258]{FH91}, there is a decomposition
\begin{equation}\label{equation: wedge 3 dec}
\wedge^3 \Std \simeq \Std \otimes \chi \oplus \Gamma,
\end{equation}
where $\Gamma: \GSp_6 \rightarrow \GL_{14}$ is an irreducible representation. It follows that the representation $\Gamma$ is afforded by $U_\ell$. Recall that we denote by $\varrho_\ell$ the $\ell$-adic representation obtained from the action of $G_K$ on $V_\ell(A)$. We will denote by 
$$
\Gamma\varrho_\ell:G_K \rightarrow \GSp_6(\Q_\ell) \rightarrow \GL_{14}(\Q_\ell)\simeq \Aut(U_\ell)  
$$ 
the representation obtained as the composition of $\varrho_\ell$ with the map on $\Q_\ell$-points induced by $\Gamma$. For a contradiction, suppose that there exists a proper nontrivial subspace $Y_\ell \subseteq U_\ell$ that is stable under the action of $\Gamma\varrho_\ell(G_K)$. Let $H \subseteq \GSp_6$ denote the stabiliser of the subspace $Y_\ell$ under the action via~$\Gamma$. Since $\Gamma$ is irreducible, $H$ is a proper algebraic subgroup of $\GSp_6$. Since $Y_\ell$ is stable under the action of $\Gamma\varrho_\ell(G_K)$, we deduce that $\varrho_\ell(G_K)$ is contained in $H(\Q_\ell)$. Hence, the Zariski closure of $\varrho_\ell(G_K)$ is contained in $H\subsetneq \GSp_6$. This contradicts Serre's open image theorem \cite[Page 51, Corollaire]{Ser13}.  

Hence, to prove the proposition it will suffice to show that $U_\ell$ cannot be isomorphic to $V_\ell(X)(1)$ for any abelian variety $X$ defined over $K$. 

For $\p$ a prime of $K$, let $p$ denote its residue characteristic, and write $v_p$ for an extension to $\Qbar$ of the $p$-adic valuation. By \cite[Proposition 8 and Proposition 10]{Fit24} (applied with $\theta$ equal to $\varrho_\ell$) there exists a positive density set $R$ of primes $\p$ of $K$ of good reduction for $A$ with the following property: if $\alpha_1,\dots ,\alpha_6$ are the eigenvalues of $\varrho_\ell(\Frob_\p)$ ordered so that $v_p(\alpha_i)\leq v_p(\alpha_j)$ if $i< j$, then $v_p(\alpha_1)=v_p(\alpha_2)=0$ and $v_p(\alpha_3)\leq 1/2$.

For a contradiction, suppose that $U_\ell$ is isomorphic to $V_\ell(X)(1)$ for some abelian variety $X$ defined over $K$. 
Let $\p$ be a prime in $R$ not lying above $\ell$. In particular, $\p$ is a prime of good reduction for $X$. On the one hand, any eigenvalue $\alpha$ of $\Frob_\p$ acting on $V_\ell(X)(1)$ must satisfy $v_p(\alpha)\geq 1$. On the other hand, since $\p$ lies in $R$, there is an eigenvalue of $\Frob_\p$ acting on $U_\ell$ of the form $\alpha_1 \alpha_2 \alpha_3$, where $\alpha_1,\alpha_2,\alpha_3$ are eigenvalues of $\varrho_\ell(\Frob_\p)$ with  $v_p(\alpha_1)=v_p(\alpha_2)=0$ and $v_p(\alpha_3)\leq 1/2$. This contradiction completes the proof of the proposition.
\end{proof}

The previous proposition should be compared with the following consequence of Theorem \ref{theorem: maintheorem}.

\begin{corollary}
Let $A$ be an abelian threefold defined over $K$ such that $\End^0(A_\kbar)$ is a field containing an imaginary quadratic field $M$.
If $KM \neq K$, then assume further that there exists a field embedding $\sigma\colon K \hookrightarrow \C$ such that either $\sigma(K)\subseteq \R$ or $j(\cO_M)\in \sigma(K)$.

Then there exists an elliptic curve $E$ defined over $K$ with potential complex multiplication by $M$ such that $H^1_\et(E_\kbar\times A_\kbar, \Q_\ell)$ is a $G_K$-subrepresentation of $H^3_\et(A_\kbar, \Q_\ell(1))$.
\end{corollary}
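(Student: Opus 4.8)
The plan is to unwind the cohomological assertion into a statement about Tate modules and then feed it into Theorem~\ref{theorem: maintheorem}. First I would check the hypotheses: since $\End^0(A_\kbar)$ is a field it is not isomorphic to $\mathrm{M}_3(M)$, so by Proposition~\ref{proposition: prel2} the pair $(A,\iota)$ has signature $(2,1)$, and by Proposition~\ref{proposition: prel1} the minimal field $L$ over which $\iota(M)$ is defined equals $KM$, whence $[L:K]\leq 2$ and the theorem applies. Using the isomorphisms recalled at the start of this section, namely $H^1_\et(X_\kbar,\Q_\ell)\simeq V_\ell(X)^\vee$ together with the Weil pairing identification $V_\ell(X)^\vee\simeq V_\ell(X)(-1)$, the isomorphism $H^3_\et(A_\kbar,\Q_\ell)\simeq \wedge^3V_\ell(A)^\vee\simeq (\wedge^3V_\ell(A))(-3)$, and the Künneth splitting $H^1_\et(E_\kbar\times A_\kbar,\Q_\ell)\simeq V_\ell(E)^\vee\oplus V_\ell(A)^\vee$, the desired embedding becomes, after a Tate twist by $(2)$, an embedding of $\Q_\ell[G_K]$-modules
\[
V_\ell(E)(1)\oplus V_\ell(A)(1)\hookrightarrow \wedge^3V_\ell(A).
\]
The summand $V_\ell(A)(1)$ is already embedded by the map $-\wedge w$ described at the start of this section, which (by semisimplicity) gives $\wedge^3V_\ell(A)\simeq V_\ell(A)(1)\oplus U_\ell$ for a $G_K$-stable complement $U_\ell$. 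Thus it suffices to produce an embedding $V_\ell(E)(1)\hookrightarrow U_\ell$.

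I would carry out the remaining work after extending scalars to $M_\lambda$ for a fixed prime $\lambda\mid\ell$ of $M$, and descend at the very end. Base-changing $V_\ell(A_L)\otimes_{\Q_\ell}M_\lambda\simeq V_\lambda(A_L)\oplus \overline V_\lambda(A_L)$ and taking third exterior powers gives, as $M_\lambda[G_L]$-modules,
\[
\wedge^3V_\ell(A_L)\otimes_{\Q_\ell}M_\lambda\simeq W_\lambda\oplus \overline W_\lambda\oplus\big(\wedge^2 V_\lambda(A_L)\otimes \overline V_\lambda(A_L)\big)\oplus\big(V_\lambda(A_L)\otimes \wedge^2\overline V_\lambda(A_L)\big),
\]
where $W_\lambda=\wedge^3V_\lambda(A_L)$. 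The key local point is to locate $V_\ell(A)(1)$ in this decomposition. Since the Rosati involution of the polarisation restricts to complex conjugation on $\iota(M)$ (as $M$ is a CM field), the space $V_\lambda(A_L)$ is isotropic for the Weil pairing, so $w$ lies in the cross term $\big(V_\lambda(A_L)\otimes \overline V_\lambda(A_L)\big)(-1)$. Consequently the restriction to $G_L$ of the image of $-\wedge w$, that is of $V_\ell(A)(1)\otimes_{\Q_\ell}M_\lambda$, lands in the two middle summands; this forces the outer part $W_\lambda\oplus \overline W_\lambda$ to be a direct summand of $\mathrm{res}_{G_L}(U_\ell\otimes_{\Q_\ell}M_\lambda)$.

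Now I would invoke Theorem~\ref{theorem: maintheorem}. If $L=K$, it yields $V_\ell(E)(1)\otimes_{\Q_\ell}M_\lambda\simeq W_\lambda\oplus \overline W_\lambda$, which by the previous paragraph is a direct summand of $U_\ell\otimes_{\Q_\ell}M_\lambda$. If $[L:K]=2$, it yields $V_\ell(E)(1)\otimes_{\Q_\ell}M_\lambda\simeq \Ind^K_L W_\lambda$; here I would first check that $\Ind^K_L W_\lambda$ is irreducible over $M_\lambda$. This holds because $\acc{s}\psi=\overline\psi$ by Lemma~\ref{lemma: complex comm}, while $\psi\neq\overline\psi$ (otherwise $\psi$ would be $s$-invariant, hence descend to $K$, and $E$ would acquire complex multiplication by $M$ over $K$, forcing $M\subseteq K$ and contradicting $[L:K]=2$), so $W_\lambda\not\simeq \acc{s}W_\lambda$. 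Since $W_\lambda$ is a summand of $\mathrm{res}_{G_L}(U_\ell\otimes_{\Q_\ell}M_\lambda)$, Frobenius reciprocity gives $\Hom_{G_K}(\Ind^K_L W_\lambda,\,U_\ell\otimes_{\Q_\ell}M_\lambda)\neq 0$, and irreducibility upgrades this nonzero map to a direct summand. In either case $V_\ell(E)(1)\otimes_{\Q_\ell}M_\lambda$ embeds into $U_\ell\otimes_{\Q_\ell}M_\lambda$.

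Finally, a standard descent argument for semisimple representations (an inclusion $X\hookrightarrow Y$ of finite-dimensional semisimple $\Q_\ell[G_K]$-modules can be detected after the field extension $\Q_\ell\subseteq M_\lambda$, since multiplicities of constituents are preserved and non-isomorphic $\Q_\ell$-simples have disjoint constituents over $M_\lambda$) promotes the embedding over $M_\lambda$ to the required embedding $V_\ell(E)(1)\hookrightarrow U_\ell$ of $\Q_\ell[G_K]$-modules; adding back the summand $V_\ell(A)(1)$ then gives the displayed embedding, and $E$ has potential complex multiplication by $M$ by construction. The two steps I expect to be the genuine obstacles are this semisimple descent from $M_\lambda$ to $\Q_\ell$ and, when $[L:K]=2$, the irreducibility of the induced module; both are resolved by the inputs already in hand (the structure of $\psi$ and Lemma~\ref{lemma: complex comm}), so beyond these the corollary is bookkeeping around Theorem~\ref{theorem: maintheorem}.
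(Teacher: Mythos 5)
Your proposal is correct and its skeleton coincides with the paper's: reduce to Tate modules, feed the hypotheses into Theorem~\ref{theorem: maintheorem} via Propositions~\ref{proposition: prel1} and~\ref{proposition: prel2}, and in the case $[L:K]=2$ use Frobenius reciprocity to pass from a $G_L$-embedding to a $G_K$-embedding. The genuine difference is how you keep the $H^1_\et(A_\kbar,\Q_\ell)$ summand out of the way of $H^1_\et(E_\kbar,\Q_\ell)$: you locate the image of $-\wedge w$ explicitly inside the mixed summands $\wedge^2 V_\lambda\otimes \overline V_\lambda$ and $V_\lambda\otimes\wedge^2\overline V_\lambda$ of $\wedge^3V_\ell(A_L)\otimes M_\lambda$, using isotropy of $V_\lambda(A_L)$ for the Weil pairing, whereas the paper simply observes that $A$ is geometrically simple (as $\End^0(A_\kbar)$ is a field), so $\Hom_{G_K}(H^1_\et(E_\kbar,\Q_\ell),H^1_\et(A_\kbar,\Q_\ell))=0$ and by semisimplicity any embedding of $H^1_\et(E_\kbar,\Q_\ell)$ automatically lands in a complement of the copy of $H^1_\et(A_\kbar,\Q_\ell)$. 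The paper's route is shorter (one line, resting implicitly on Faltings); yours is more self-contained on this point but requires choosing a polarisation whose Rosati involution stabilises $\iota(M)$ and acts on it as complex conjugation --- such a polarisation exists, but you should say you are choosing one, since an arbitrary polarisation need not have this property. On the other hand, your write-up supplies two details the paper leaves implicit and which are genuinely needed: the irreducibility of $\Ind^K_L W_\lambda$ (without which Frobenius reciprocity only gives a nonzero, not necessarily injective, map) and the descent of the embedding from $M_\lambda$- to $\Q_\ell$-coefficients. For the former, note that your detour through descent of $\psi$ can be shortcut: $\psi\neq\overline\psi$ follows at once from comparing the infinity types $\sigma\circ\Nm_{L/M}$ and $\overline\sigma\circ\Nm_{L/M}$.
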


\begin{proof} 
Since $A$ is geometrically simple, we have
$$
\Hom_{G_K}(H^1_\et(E_\kbar, \Q_\ell),H^1_\et(A_\kbar, \Q_\ell))=0.
$$
In view of the first paragraph of this section, it will suffice to show that $H^1_\et(E_\kbar, \Q_\ell)$ is a $G_K$-subrepresentation of $H^3_\et(A_\kbar, \Q_\ell(1))$.
By Proposition \ref{proposition: prel1}, the minimal extension $L/K$ such that $\iota(M)\subseteq \End^0(A_L)$ is $L=KM$. By Theorem \ref{theorem: maintheorem} there exists an elliptic curve $E$ defined over $K$ with potential complex multiplication by $M$ such that
$$
  V_\ell(E_L)(1)\otimes _{\Q_\ell} M_\lambda \simeq \wedge^3 V_\lambda (A_L) \oplus \wedge^3 \overline V_\lambda(A_L) \subseteq\wedge^3_{M_\lambda} (V_\ell(A_L) \otimes_{\Q_\ell} M_\lambda)\simeq (\wedge^3_{\Q_\ell} V_\ell(A_L) )\otimes_{\Q_\ell} M_\lambda
$$
of $M_\lambda[G_L]$-modules. Dualising, we obtain $H^1_\et(E_\kbar, \Q_\ell)$ as a $G_L$-subrepresentation of $H^3_\et (A_\kbar,\Q_\ell(1))$. If $L=K$, this yields the corollary. Otherwise, we have that $[L:K]=2$, and, as in the previous case, we see that $V_\lambda(E_L)(1)^\vee$ is a $G_L$-subrepresentation of $H^3_\et (A_\kbar,\Q_\ell(1))\otimes_{\Q_\ell} M_\lambda$. Since $V_\ell(E)\otimes_{\Q_\ell} M_\lambda \simeq \Ind^K_L V_\lambda(E_L)$ as $G_K$ representations,  Frobenius reciprocity shows that $H^1_\et(E_\kbar, \Q_\ell)$ is a $G_K$-subrepresentation of $H^3_\et(A_\kbar, \Q_\ell(1))$, as desired. 
\end{proof}

\section{Field of definition $K=\Q$}\label{section: Q}

We conclude the article by making some considerations specific to the case in which the field of definition is $K=\Q$. 

\subsection*{Approach via modular forms.} We first note that, in this case, one can prove Theorem~\ref{theorem: maintheorem} by invoking the Eichler-Shimura construction instead of Theorem \ref{theorem: Casselman}.
Indeed, observe that  $M$ is imaginary by Lemma \ref{lemma: algebra embedding}.
Let $W_\lambda $ denote $\wedge^3V_\lambda(A_M)$.
Then, the Hecke character $\psi$ associated to $\{W_\lambda(-1)\}_\lambda$, of $M$ with values in $M$, corresponds, by Proposition \ref{proposition: prel2} and \cite[Lemma 3]{Shi71} or \cite[p. 717]{Hec59} (see also \cite[Theorem 3.4]{Rib77}), to a normalised weight 2 newform $f=\sum_{n\geq 1}c_nq^n$ with complex multiplication by $M$.
For all but finitely many primes $p$, the Fourier coefficients $c_p$ satisfy $c_p=0$ if $p$ is inert in $M$ and $c_p=\psi(\p)+\psi(\overline\p)$ if $p=\p \overline \p$ is split in $M$.
In other words, for all but finitely many primes $p$ we have that 
$$
c_p=\Tr(\Frob_p|\Ind^\QQ_M(W_\lambda (-1)).
$$
By Lemma \ref{lemma: complex comm}, if $p=\p \overline \p$ is split in $M$, we must have $c_p=\psi(\p)+\overline{\psi(\p)}$, which implies that $f$ has rational coefficients. 
Then, the abelian variety attached to $f$ by the Eichler--Shimura construction is an elliptic curve $E$ defined over $\Q$, it satisfies that $E_M$ has complex multiplication by $M$, and for all but finitely many primes $p$ we have that 
$$
c_p=\Tr(\varrho_{E,\ell}(\Frob_p)).
$$
Theorem \ref{theorem: maintheorem} then follows from the Chebotaryov density theorem and \cite[Chapter II, Theorem 4.3]{Sil94}.

\subsection*{Explicit examples.} Let $A$ be an abelian threefold defined over $\Q$ such that $M=\End^0(A_\Qbar)$ is an imaginary quadratic field. It follows from Corollary \ref{corollary: bounded class number} that $M$ is limited to $9$ possibilities. It is then natural to ask the following.

\begin{question}
Which imaginary quadratic fields occur as $\End^0(A_\Qbar)$ for some abelian threefold $A$ defined over $\Q$?
\end{question}

Table \ref{table: examples} shows examples realising $M=\Q(\sqrt {-d})$ for $d=1,2,3,7$.
The example with $d=3$ may be verified using \cite{Zarhin_superelliptic} and the rest using \cite{Rigorous_comp}. Examples with $d=1,3$ are easy to construct. The more exotic examples with $d=2,7$ were found by Shiva Chidambaran by examining the databases of genus 3 curves created by Drew Sutherland with Sato--Tate group $\Unitary(3)$, i.e., the Sato--Tate group compatible with the sought for geometric endomorphism algebra (see \cite{Sut19} and \cite[\S7.3]{FKS25}).

\begin{table}[bth!]
\setlength{\tabcolsep}{10pt}
\begin{tabular}{lr}
$d$ & Curve \\\midrule
1 & $y^2 = x^7 -x^5 +x^3 +x $\\\vspace{0,1cm}
2 & $y^2 = x^8 + 7x^7 + 16x^6 + 10x^5 - 6x^4 - 2x^3 + 3x^2 - x$\\
\vspace{0,1cm}
3 &  $y^3 = x^4 -x +1$ \\\vspace{0,1cm}
7 & $ 2x^3+x^2y^2+x^2y+2xy+x+1=xy^2+y^4+2y^3+2y$\\
\bottomrule

\end{tabular}
\bigskip

\caption{Genus 3 curves $C$ over $\Q$ such that $\End^0 (\Jac(C_\Qbar))\simeq \Q(\sqrt{-d})$.}\label{table: examples}
\end{table}

\end{document}